\newcommand{\R}{\mathbb{R}}
\newcommand{\RGG}{{\cal{G}}(n,r)}
\newcommand{\kn}{\omega(n)}
\newcommand{\cN}{{\overline{N}}}
\newcommand{\Ncal}[0]{\ensuremath{{\mathcal N}}}
\newcommand{\eR}[0]{\ensuremath{ \mathbb R}}
\newcommand{\eN}[0]{\ensuremath{ \mathbb N}}
\newcommand{\ctil}[0]{\tilde{c}}
\newcommand{\norm}[1]{\ensuremath{\|#1\|}}
\newcommand{\Pee}[0]{\ensuremath{{\mathbb P}}}
\newcommand{\Ee}[0]{\ensuremath{{\mathbb E}}}
\newcommand{\Var}[0]{\ensuremath{{\mbox{Var}}}}
\newcommand{\isd}[0]{\hspace{.2ex} \raisebox{-.1ex}{$=$} \hspace{-1.5ex} 
\raisebox{1ex}{{$\scriptstyle d$}} \hspace{.8ex} }
\DeclareMathOperator{\area}{area}
\DeclareMathOperator{\diam}{diam}
\DeclareMathOperator{\dd}{d}
\DeclareMathOperator{\Po}{Po}
\newcommand{\conM}[0]{{\bf(M)}}
\newcommand{\nb}[0]{\ensuremath{\Ncal_{c}}}
\newenvironment{my_itemize}{
\begin{itemize}
  \setlength{\itemsep}{1pt}
  \setlength{\parskip}{1pt}
  \setlength{\parsep}{0pt}}{\end{itemize}
}
\newif\ifnotesw\noteswtrue
\newtheorem{theorem}{Theorem}[section]
\newtheorem{lemma}[theorem]{Lemma}
\newtheorem{cor}[theorem]{Corollary}
\newcommand{\bpf}[1][Proof.]{\smallskip\noindent{\it #1} }
\newcommand{\bpfof}[1]{\smallskip\noindent{\emph{Proof of #1}.}}
\newcommand{\qed}{\nolinebreak\mbox{\hspace{5 true pt}%
  \rule[-0.85 true pt]{3.9 true pt}{8.1 true pt}}}
\newcommand{\epf}{\qed \medskip}
\newenvironment{proof}{\vspace{1ex}\noindent{\bf Proof:}}{\hspace*{\fill}$\blacksquare$\vspace{1ex}}
\newenvironment{proofof}[1]{\vspace{1ex}\noindent{\bf Proof of #1:}}{\hspace*{\fill}$\blacksquare$\vspace{1ex}}
\begin{document}

\title{Cops and Robbers on Geometric Graphs}
\author{
Andrew Beveridge\thanks{Department of Mathematics, Statistics and Computer
Science, Macalester College, Saint Paul, MN: \texttt{abeverid@macalester.edu}}\,, 
Andrzej Dudek\thanks{Department of Mathematics, Western Michigan University, Kalamazoo, MI:
\texttt{andrzej.dudek@wmich.edu}}\,,
Alan Frieze\thanks{Department of Mathematical Sciences, Carnegie Mellon University, Pittsburgh, PA: \texttt{alan@random.math.cmu.edu}. Supported in part by NSF Grant CCF1013110}\,,
Tobias M\"uller\thanks{Centrum voor Wiskunde en Informatica, Amsterdam, the Netherlands: \texttt{tobias@cwi.nl}.
Supported in part by a VENI grant from Netherlands Organization for Scientific Research (NWO)}
}

\maketitle

\begin{abstract}
Cops and robbers is a turn-based pursuit game played on a graph $G$. 
One robber is pursued by a set of cops. In each round, these agents move between vertices along the edges of the graph. 
The cop number $c(G)$ denotes the minimum number of cops required to catch the robber in finite time. 
We study the cop number of geometric graphs. For points $x_1, \ldots , x_n \in \R^2$, and $r \in \R^+$,  the vertex set of 
the geometric graph $G(x_1, \ldots , x_n; r)$ is the graph on these $n$ points, with  $x_i, x_j$ adjacent 
when $ \norm{x_i -x_j } \leq r$.   We prove that $c(G) \leq 9$ for any connected geometric graph $G$ in $\R^2$
and we give an example of a connected geometric graph with $c(G) = 3$. 
We improve on our upper bound for random geometric graphs that are sufficiently dense. 
Let $\RGG$ denote the probability space of geometric graphs with $n$ vertices chosen uniformly and independently from $[0,1]^2$. 
For $G \in \RGG$, we show that with high probability (whp),   
if $r \geq K_1 (\log n/n)^{\frac14}$, then $c(G) \leq 2$, and  if $r \geq K_2(\log n/n)^{\frac15}$, then $c(G) = 1$ 
where $K_1, K_2 > 0$ are absolute constants. 
Finally, we provide a lower bound near the connectivity regime of $\RGG$: if $r \leq K_3 \log n / \sqrt{n} $ then $c(G) > 1$ whp, where $K_3 > 0$ is an absolute constant.
\end{abstract}

\section{Introduction}

The game of cops and robbers is a full information game played on a graph $G$. The game was introduced independently by Nowakowski and Winkler \cite{Nowakowski+Winkler} and  Quilliot \cite{quilliot}. During play, one robber $R$ is pursued by a set of cops $C_1, \ldots, C_{\ell}$. Initially, the cops choose their locations on the vertex set. Next, the robber chooses his location. The cops and the robber are aware of the location of all agents during play, and the cops can coordinate their motion.  On the cop turn, each cop  moves to an adjacent vertex, or remains stationary. This is followed by the robber turn, and he moves similarly. The game continues with the players alternating turns. The cops win if they can catch the robber in finite time, meaning that some cop is colocated with the robber. The robber wins if he can evade capture indefinitely.

The original formulation \cite{Nowakowski+Winkler,quilliot}  concerned a single cop chasing the robber. These papers  characterized the structure of \emph{cop-win} graphs for which a single cop has a winning strategy. For $v \in V(G)$, the \emph{neighborhood} of $v$ is $N(v) = \{ u \in V(G) \mid (u,v) \in E(G) \}$ and the \emph {closed neighborhood} of $v$ is $\cN(v) = \{ v \} \cup N(v)$. When $\cN(u) \subseteq \cN(v)$, we say that $u$ is a \emph{pitfall}. A graph is \emph{dismantlable} if we can reduce $G$ to a single vertex by successively removing pitfalls. 

\begin{theorem}[\cite{Nowakowski+Winkler, quilliot}]
\label{thm:pitfall}
$G$ is dismantlable if and only if $c(G)=1$.
\end{theorem}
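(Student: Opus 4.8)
The statement is the classical Nowakowski--Winkler/Quilliot characterization, and the plan is to prove both implications by induction on $|V(G)|$, where each inductive step removes a single pitfall. The common engine is that collapsing a pitfall is a retraction: if $u$ is a pitfall with $\cN(u) \subseteq \cN(v)$, then the map $f\colon V(G) \to V(G)\setminus\{u\}$ that fixes every vertex except $u$ and sends $u \mapsto v$ is a graph homomorphism onto $G-u$ (edges incident to $u$ are preserved precisely because $\cN(u) \subseteq \cN(v)$) that restricts to the identity on $G-u$. I would isolate two transfer facts, both proved by a \emph{shadow} argument: (i) if $G-u$ is cop-win then so is $G$, and (ii) if $G$ is cop-win then so is $G-u$.

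For \emph{dismantlable $\Rightarrow$ cop-win} I would induct and use (i). By dismantlability $G$ has a pitfall $u$, and $G-u$ is again dismantlable, hence cop-win by induction. The cop then plays on $G$ by tracking the shadow $f(R)$ of the robber: since $f$ is a homomorphism, $f(R)$ is a legal robber-walk in $G-u$, so the cop can run his winning $G-u$ strategy against it and reach $f(R)$. At that moment the real robber is either caught (if it is not at $u$) or it sits at $u$ while the cop sits at $v$; since $u \in \cN(u) \subseteq \cN(v)$ forces $u \sim v$, one further cop move captures it. The single-vertex base case is trivial.

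For \emph{cop-win $\Rightarrow$ dismantlable} I would again induct, the two ingredients being (a) every cop-win graph on at least two vertices has a pitfall, and (b) fact (ii), that deleting a pitfall preserves cop-win (a second shadow argument, now projecting the cop's $G$-strategy through $f$ down onto $G-u$). Granting (a) and (b), a cop-win $G$ has a pitfall $u$, $G-u$ is cop-win by (b) hence dismantlable by induction, and prepending $u$ dismantles $G$. I expect step (a) to be the main obstacle.

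To prove (a) I would introduce the nested relations $\preceq_i$ on $V(G)$ that encode the game: set $u \preceq_0 v$ iff $u=v$, and $u \preceq_{i+1} v$ iff for every $u' \in \cN(u)$ there is $v' \in \cN(v)$ with $u' \preceq_i v'$, so that $u \preceq_i v$ means a cop at $v$ catches a robber at $u$ within $i$ robber-moves. A short induction gives $\preceq_i \subseteq \preceq_{i+1}$, so on a finite graph the relations stabilize. The two observations that drive the argument are that $u \preceq_1 v$ with $u \ne v$ says exactly $\cN(u) \subseteq \cN(v)$, i.e.\ that $u$ is a pitfall, and that cop-win supplies a vertex $v_0$ with $u \preceq_i v_0$ for all $u$ and some finite $i$, hence at least one pair $u \ne v$ related at a finite level. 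I would then choose such a pair $(u,v)$ minimizing $i$ and argue this minimum equals $1$: were it some $m \ge 2$, applying the definition of $\preceq_m$ to each $u' \in \cN(u)$ would produce $v' \in \cN(v)$ with $u' \preceq_{m-1} v'$, and any instance with $v' \ne u'$ would be a related pair at a strictly smaller level, contradicting minimality; so necessarily $v'=u'$ throughout, giving $\cN(u) \subseteq \cN(v)$ and thus $u \preceq_1 v$, again contradicting $m \ge 2$. Hence the minimal pair already satisfies $\cN(u) \subseteq \cN(v)$, exhibiting a pitfall. Verifying this minimal-index step cleanly, together with confirming the game-theoretic reading of $\preceq_i$ and the characterization of cop-win through it, is the delicate part of the whole argument.
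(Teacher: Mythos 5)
The paper does not prove this theorem at all: it is imported verbatim from the cited references of Nowakowski--Winkler and Quilliot, so there is no in-paper argument to compare against. Your proposal is correct and is essentially the classical proof from those references --- the retraction/shadow argument (collapsing a pitfall $u$ onto its dominating vertex $v$) for the two transfer facts, and the nested relations $\preceq_i$ with the minimal-index extraction of a corner for the cop-win $\Rightarrow$ dismantlable direction --- with the genuinely delicate points (the game-theoretic reading of $\preceq_i$ under the robber-moves-first convention, and the fixed-point/evasion argument showing a cop win forces $u \preceq_i v_0$ for all $u$) correctly identified as such.
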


Aigner and Fromme \cite{aigner+fromme} introduced the multiple cop variant described above. For a fixed graph $G$, they defined the \emph{cop number} $c(G)$ as the minimum number of cops for which there is a winning cop strategy on $G$. Among their results, they proved the following.

\begin{theorem}[\cite{aigner+fromme}]
\label{thm:planar}
If $G$ is a connected planar graph, then $c(G)\leq 3$.
\end{theorem}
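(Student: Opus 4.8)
The plan is to isolate one key lemma---that a single cop can permanently \emph{guard} a shortest path---and then to exploit planarity to confine the robber to a region of the plane that we force to shrink at every stage.

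\textbf{The guarding lemma.} First I would prove that in any connected graph a single cop can guard an isometric (shortest) path $P = p_0 p_1 \cdots p_k$, meaning that after a finite setup phase the cop sits on $P$ and captures the robber the instant he steps onto $P$. The strategy is for the cop to \emph{shadow} the robber: if the robber is at $x$, the cop occupies $p_i$ with $i = \min\big(k,\ \dist(x,p_0)\big)$. The crucial point is that this target index is $1$-Lipschitz in the robber's position---when the robber crosses a single edge, $\dist(x,p_0)$ changes by at most $1$, hence so does $i$---so the cop, moving along the one-dimensional path $P$, can reach the initial target in finitely many moves and thereafter match every shift. A short check using $\dist(p_0,p_j)=j$ (which holds precisely because $P$ is geodesic) shows that whenever the robber stands adjacent to some $p_m$, the cop's index satisfies $|i-m|\le 1$; thus if the robber moves onto $p_m$ the cop captures him on the next turn. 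This is the only place the geodesic hypothesis is used, and it is exactly what makes the robber's projection move slowly enough to be tracked.

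\textbf{Trapping the robber with three cops.} Fix a planar embedding of $G$. I would maintain the invariant that the robber is confined to the open region $R$ enclosed by a closed walk $P_1 \cup P_2$, where $P_1,P_2$ are isometric paths sharing endpoints $a,b$, with cop $1$ guarding $P_1$ and cop $2$ guarding $P_2$; the third cop is free (degenerate configurations, including the initial one in which $R$ is bounded by a single guarded geodesic, are allowed). In the reduction step the free cop builds a new isometric path $Q$ from $a$ to $b$ through the interior of $R$ and begins guarding it. Since $Q$ is now guarded and $P_1,P_2$ already are, the robber can cross none of the three, so he is trapped in one of the two sub-regions into which $Q$ splits $R$, say $R'$. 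The boundary of $R'$ consists of $Q$ together with exactly one of $P_1,P_2$; the cop guarding the other is therefore free again, and after relabelling we recover the invariant with $R'$ in place of $R$. Because $R'$ is a proper subregion, the number of vertices available to the robber strictly decreases, so after finitely many rounds $R$ contains no vertex and the robber is caught, giving $c(G)\le 3$.

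\textbf{The main obstacle.} The guarding lemma is routine once the $1$-Lipschitz projection is identified; the real difficulty is the reduction step. One must choose the dividing path $Q$ so that simultaneously (i) $Q$ is isometric \emph{in all of} $G$---otherwise the guarding lemma does not apply and the cop tracking it could be forced out of $R$---(ii) $Q$ genuinely cuts $R$ into two nonempty pieces so that the robber's territory strictly shrinks, and (iii) $Q$ meets the existing boundary only at $a,b$, so that the invariant is restored with a bona fide two-geodesic boundary. Arranging such a $Q$, and in particular ruling out the robber leaking out through the shared endpoints $a,b$ or through vertices lying on more than one boundary path, is the delicate heart of the argument, and it is precisely here that planarity is exploited.
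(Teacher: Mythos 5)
Your blueprint is exactly the Aigner--Fromme argument that the paper invokes for this theorem: your guarding lemma is the paper's Lemma~\ref{lemma:control} (a single cop controls a shortest path, via the $1$-Lipschitz shadow/projection argument, and your sketch of it is sound), and your invariant---robber territory bounded by two guarded geodesics with a third cop free to subdivide---is precisely the stage-by-stage territory-shrinking scheme that the paper summarizes when it adapts this proof for Theorem~\ref{thm:nine}.

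The genuine gap is in the reduction step, which you correctly identify as the heart but then frame in a way that would fail. You insist that the dividing path $Q$ be isometric \emph{in all of} $G$. That requirement is in general impossible to meet: the paths $P_1,P_2$ bounding the territory are themselves $G$-geodesics from $a$ to $b$, so if the interior of the territory contains only long detours between $a$ and $b$, then \emph{every} $G$-geodesic from $a$ to $b$ avoids the interior, the free cop has no admissible path to build, and the induction stalls at the first stage. The missing idea---visible even in the paper's one-line summary, which speaks of ``shortest paths \emph{in $H_i$}''---is to relativize everything to the robber's current territory: $Q$ is chosen as a shortest path in the subgraph $H_i$ (territory together with its bounding paths), and the guarding lemma is applied \emph{inside} $H_i$. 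This is legitimate because the robber can never leave $H_i$ without being caught, so the only distances relevant to the shadowing argument are $H_i$-distances and the $1$-Lipschitz tracking goes through verbatim; moreover, a path that is isometric in $H_i$ remains isometric in every subgraph of $H_i$ containing it (subgraph distances only increase, while the path still realizes its own length), so the guarded boundary stays guardable when the territory shrinks to $H_{i+1}$. Once $G$-isometricity is replaced by territory-isometricity, your conditions (ii) and (iii) are what planarity delivers---the new path, drawn inside the embedded territory, separates it and frees one of the two old guards---and that is how Aigner and Fromme close the argument.
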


Various authors have studied the cop number of  families of graphs \cite{frankl, frankl2, mehrabian, neufeld}. Recently, significant attention has been directed towards Meyniel's conjecture (found in \cite{frankl2}) that $c(G) = O(\sqrt{n})$ for any $n$ vertex graph. The best current bound is $c(G) \leq n 2^{-(1+o(1))\sqrt{\log n}}$, obtained independently in \cite{lu+peng,scott+sudakov,FKL}. The history of Meyniel's conjecture is surveyed in \cite{baird+bonato}. For further results on vertex pursuit games on graphs, see the surveys \cite{alspach, hahn} and the monograph \cite{bonato+nowakowski}.

Herein, we study the game of cops and robbers on geometric graphs in $\R^2$. 
Given points $x_1, \ldots , x_n \in \R^2$ and $r \in \R^+$, the \emph{geometric graph} $G=G(x_1, \ldots, x_n; r)$ has vertices $V(G) = \{ 1, \ldots , n \}$ and $i j \in E(G)$ if and only if $\norm{ x_i - x_j} \leq r$. 
Geometric graphs  are widely used to model ad-hoc wireless networks \cite{gupta+kumar,xue+kumar}.
For convenience, we will consider $V(G) = \{x_1, \ldots x_n \}$, referring to ``point $x_i$'' or ``vertex $x_i$'' when this distinction is required. Our first result gives a constant upper bound on the cop number of 2-dimensional geometric graphs.

\begin{theorem}
\label{thm:nine}
If $G$ is a connected geometric graph in $\R^2$, then $c(G) \leq 9$. 
\end{theorem}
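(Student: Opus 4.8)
The plan is to reduce the problem to the planar case covered by Theorem~\ref{thm:planar}, paying for the reduction with the gap between $3$ and $9$. The key geometric observation is that a connected geometric graph in $\R^2$ is a \emph{unit disk graph}, and these are quasi-planar in a controlled sense: although $G$ itself need not be planar, edges can only cross in a bounded way because two points at distance at most $r$ that are ``far'' in the graph metric cannot exist. More precisely, I would tile the plane by a square grid of side length $r/\sqrt2$ (so that any two points in the same cell are adjacent, i.e.\ each cell induces a clique), and observe that a vertex in a given cell can only be adjacent to vertices in a bounded neighborhood of cells. The aim is to build a planar ``skeleton'' graph $H$ on which three cops suffice by Theorem~\ref{thm:planar}, and then show that each cop in $H$ can be simulated by a bounded number of cops in $G$, with the total coming out to at most $9$.

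First I would make the cell structure precise: place a grid so that occupied cells become ``super-vertices,'' and join two super-vertices when some vertex of one is adjacent to some vertex of another in $G$. Because adjacency forces geometric proximity, each occupied cell is adjacent only to cells within a fixed bounded radius (a constant number of neighboring cells determined by $r$ and the cell side), so the super-vertex graph $H$ has bounded local structure and, crucially, can be realized as a planar graph---one can draw $H$ by placing each super-vertex at its cell center and routing edges through the plane without unavoidable crossings, since the cell adjacency pattern is that of a bounded-degree grid-like graph. Then I would run the Aigner--Fromme strategy for three cops on $H$. Each ``cop on a super-vertex of $H$'' must be translated back into genuine cops sitting on actual vertices of $G$ inside that cell. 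Since each cell is a clique in $G$, a cop inside a cell can move freely among all vertices of that cell in a single step, so a constant number of cops per $H$-cop should let them shadow the $H$-strategy and additionally guard all vertices in the relevant cells.

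The main obstacle, and where the factor jumps from $3$ to $9$, is the \emph{simulation/timing} argument: a single step of a cop in $H$ (moving from one cell to an adjacent cell) may correspond to a path of several steps in $G$, because two adjacent cells in $H$ need not contain a single edge connecting the particular vertices the cops occupy. I would handle this by having each group of cops maintain an invariant---for instance, that the cops controlling one $H$-cop always occupy a ``guarding set'' of vertices in the current cell that dominates the cell and lets them transition to any adjacent occupied cell in a bounded number of moves while the robber takes only one step. The delicate part is ensuring the robber cannot exploit the timing mismatch (slipping through a cell boundary while the cops reorganize); controlling this is what forces a handful of extra cops beyond the planar bound of three, and a careful accounting of how many cops are needed to (i) shadow the planar strategy and (ii) seal off cell boundaries during transitions is what yields the bound $c(G)\le 9$.

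I expect the planar embedding of the cell-adjacency graph and the Aigner--Fromme shadowing to be routine once set up; the genuinely hard step is proving that the bounded-step simulation is robber-proof, i.e.\ that the robber gains nothing from the fact that cop moves in $G$ are ``slower'' than the idealized moves in $H$. Getting a clean invariant that survives both the cops' and the robber's turns, and verifying it costs no more than $9$ cops in total, is the crux of the argument.
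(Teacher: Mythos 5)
There is a genuine gap, and it is fatal to the approach as stated: the cell-adjacency graph $H$ is \emph{not} planar in general, and no amount of careful routing can make it so. With cells of side $r/\sqrt{2}$, two cells can be joined in $H$ whenever they contain points within distance $r$ of each other, which includes diagonally touching cells and even cells whose indices differ by $2$ in one coordinate. Concretely, writing $s = r/\sqrt{2}$, place vertices at $(s,s)$, $(1.9s,\,0.9s)$, $(0.9s,\,1.1s)$, $(1.1s,\,1.1s)$ and $(2.1s,\,0.9s)$; these lie in five distinct cells and are pairwise within distance $r$, so $H \supseteq K_5$. A ``bounded-degree grid-like'' adjacency pattern does not imply planarity (king-type graphs with distance-$2$ adjacencies are non-planar), and this is precisely the obstruction the theorem has to overcome: geometric graphs fail to be planar because of crossing edges, so any argument that begins by pretending the quotient structure is planar has assumed away the problem. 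In addition, the part you yourself flag as the crux --- the invariant showing that a constant number of cops in $G$ can simulate one cop of the planar strategy without the robber exploiting the timing mismatch --- is never actually constructed, so even granting planarity of $H$ the proposal is a plan rather than a proof, and the count of $9$ is asserted rather than derived.

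The paper takes a different route that works inside $G$ itself rather than in a quotient. The key point (Lemma \ref{lemma:patrol}) is that three cops stationed on three \emph{consecutive} vertices of a shortest path $P$ can ``patrol'' $P$: not only is the robber caught if he steps onto $P$ (Lemma \ref{lemma:control}), but he also cannot move along an edge whose segment crosses the segments of $P$. The proof is purely geometric: if the robber's jump $R^tR^{t+1}$ crossed an edge $v_iv_{i+1}$ of $P$ while avoiding the balls $B(v_i,r)$, $B(v_{i+1},r)$ around the cops, the quadrilateral $v_i R^t v_{i+1} R^{t+1}$ would contain an obtuse triangle forcing $\norm{R^t - R^{t+1}} > r$, a contradiction. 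With this lemma, each single path-guarding cop in the Aigner--Fromme planar argument (Theorem \ref{thm:planar}) is replaced by a trio of cops, and since that argument needs three path-guards, the total is $3 \times 3 = 9$. So the factor of three enters through ``patrolling versus controlling a shortest path,'' not through simulating a planar shadow strategy on a cell graph; if you want to salvage your approach, the crossing-prevention lemma is the ingredient you would need to invent anyway.
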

The proof of this theorem is an adaptation of the proof of Theorem \ref{thm:planar}. This adaptation  requires three cops on a geometric graph to play the role of a single cop on a planar graph. We also give an example of a geometric graph requiring 3 cops.

Recent years have witnessed significant interest in the study of random graph models, motivated by the need to understand complex real world networks. In this setting, the game of cops and robbers is a simplified model for network security. There are many recent results on cops and robbers on random graph models, including the Erd\H{o}s-Renyi model and random power law graphs \cite{bollobas+kun,luczak+pralat,pralat,bonato+pralat+wang,bonato+kemes+pralat, pralat+wormald}. We add to this list of stochastic models by considering cops and robbers on random geometric graphs. 
A \emph{random geometric graph} $G$ on $[0,1]^2$ contains of $n$ points drawn uniformly at random.
Two points $x,y \in V(G)$ are adjacent when the distance between them is within the connectivity radius, i.e.~$\norm{x - y} \leq r$. 
We denote the probability space of random geometric graphs by $\RGG$. 
Typically, we view the radius as a function $r(n)$, and then study the asymptotic properties of $\RGG$ as $n$ increases. 
We say that event $A$ occurs \emph{with high probability}, or \emph{whp}, when $\Pee[A] = 1 -o(1)$ as $n$ tends to infinity, or equivalently, $\lim_{n \rightarrow \infty} \Pee[A] = 1.$  
For example, $G \in \RGG$ is connected whp if $r = \sqrt{\frac{\log n+\kn}{n}}$. 
(Here and in the remainder of this paper, $\kn$ denotes an arbitrarily slowly growing function.) 
For this and further results on $\RGG$, see the monograph \cite{penrose}. 

We improve on the bound of Theorem \ref{thm:nine} when our random geometric graph is sufficiently dense.
Essentially, we determine thresholds for which we can successfully adapt known pursuit evasion strategies to the geometric graph setting. Typical analysis of $\RGG$ focusses on the homogeneous aspects of the resulting graph, resulting from tight concentration around the expected  structural properties. Our cop strategies rely on these homogeneous aspects.

When studying $G \in \RGG$, it is often productive to tile $[0,1]^2$ into small squares, chosen so that  whp, there is a vertex in each square, and vertices in neighboring squares are adjacent in $G$. We then use the induced grid  on these vertices to analyze properties of $G$, cf. \cite{avin+ercal,cooper+frieze}. It is easy to show that the 2-dimensional grid has cop number 2.  When our random geometric graph is dense enough,  we can adapt a winning two cop strategy on the grid to obtain a winning strategy on $\RGG$.

\begin{theorem}
\label{thm:two}
There is a constant $K_1>0$ such that the following holds.
If $G \in \RGG$ on $[0,1]^2$ with $r \geq K_1 (\log n/n)^{\frac14}$ then $c(G) \leq 2$ whp.
\end{theorem}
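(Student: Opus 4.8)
The plan is to discretize $[0,1]^2$ and transfer a winning two-cop strategy from the integer grid to $G \in \RGG$, using the density of $G$ to control the error that discretization introduces. First I would tile the unit square into cells of side length $s = \Theta(\sqrt{\log n/n})$. The number of points in a fixed cell is $\Bin(n,s^2)$ with mean $ns^2 = \Theta(\log n)$, so a Chernoff bound with a large enough constant, followed by a union bound over the $\Theta(1/s^2)$ cells, shows that whp every cell is nonempty. I would also record the two ``homogeneous'' facts the strategy relies on: any two points lying in the same cell, or in cells whose centres are within distance $r - O(s)$, are adjacent in $G$; and the point set is spread out enough that inside any vertical strip of width $O(s)$ there is a vertex in every window of height $O(s)$. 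Both follow immediately from the cell-occupancy estimate and a union bound.

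Next I would recall the clean two-cop strategy on the grid: one cop can guard a vertical line, confining the robber to one side, after which the second cop shrinks the robber's territory by a constant factor by advancing a second barrier across it. The adaptation replaces ``guard the line $x=a$'' by ``stay in the width-$s$ strip about $x=a$ and keep the cop's height equal to the robber's height.'' Each turn the robber changes its height by at most $r$, and the guarding cop answers by moving to the vertex of the strip closest to the target height; such a vertex exists within vertical distance $O(s)$ by the spread-out property, and since the cop may move up to Euclidean distance $r$ while staying in the width-$s$ strip, it can change its height by nearly $r$ per turn, enough to keep pace with the robber up to the $O(s)$ discretization error. Each phase of the pursuit lasts $O(1/r)$ turns (a barrier moving at speed $\Theta(r)$ sweeps across a territory of diameter at most $1$) and reduces the robber's region by a constant factor, so finitely many phases suffice to pin the robber into a single cell adjacent to a cop.

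The main obstacle, and the source of the exponent $\tfrac14$, is that the guarding cop cannot match the robber's real-valued height exactly: it may only land on an actual vertex, so each turn it incurs a tracking lag of size $O(s)$. I would show that over a single phase, which lasts $O(1/r)$ turns, the accumulated lag is $O(s/r)$, and that the vertical barrier is maintained as long as this lag is $o(r)$; the cop re-synchronises exactly onto a vertex at the start of each phase, so the lag does not build up across phases. The requirement $s/r = o(r)$, i.e.\ $s = o(r^2)$, combined with the occupancy constraint $s = \Theta(\sqrt{\log n/n})$, forces $r^2 \gg \sqrt{\log n/n}$, which is precisely $r \geq K_1 (\log n/n)^{1/4}$ for a suitable absolute constant $K_1$.

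The remaining work is bookkeeping within this framework. I would verify that the second, pursuing cop can likewise advance its barrier through the strip at speed $\Theta(r)$ while remaining legal, using the same vertex-availability estimates; check that maintaining a guarded line never requires a cop to make an illegal move (this is where the slack $r - O(s)$ in the adjacency statement is used); and confirm that the constant-factor shrinking terminates, so that the robber is eventually forced into a cell within distance $r - O(s)$ of a cop and is captured. Since all the structural facts hold simultaneously whp, the cops' strategy is well defined on $G$ whp, giving $c(G) \leq 2$.
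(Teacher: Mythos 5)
Your probabilistic ingredient (cell occupancy at scale $s=\Theta(\sqrt{\log n/n})$), the idea of tracking the robber up to a per-turn discretization error $O(s)$, and the derivation of the exponent $\tfrac14$ from the requirement $s=o(r^2)$ all match the paper's proof. But the pursuit strategy itself has a genuine gap: the claim that a guarded barrier can be advanced ``at speed $\Theta(r)$,'' so that each phase lasts $O(1/r)$ turns. A cop guarding a line must spend its movement budget tracking the component of the robber's motion \emph{parallel} to that line; if the robber oscillates parallel to the barrier at full speed $r$ (up and down across the square, say), the guarding cop's entire budget is consumed by tracking and the barrier advances by only $O(s)$ per turn, not $\Theta(r)$. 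In your sequential phase structure this is fatal: while the sweeping cop is stalled, the other cop is pinned to its static guard and cannot help. One can try to let the sweeping cop trade tracking lag for advancement (advance $\delta$ per turn at lag cost about $\delta^2/2r+s$), but then the phase lasts $\omega(1/r)$ turns, and your own error accounting --- ``accumulated lag over a phase is $O(s) \times (\text{phase length})$'' --- no longer gives lag $o(r)$ under $s = o(r^2)$; pushed through naively, this repair proves the theorem only for a larger threshold (an exponent worse than $\tfrac14$). So the speed claim and the error budget that rests on it do not hold together as written.

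The missing idea is that the two cops must advance \emph{perpendicular} barriers \emph{simultaneously}, not in phases: if cop 1 guards and advances a vertical line and cop 2 a horizontal one, then a robber displacement $(h,v)$ with $h^2+v^2\leq r^2$ leaves cop 1 an advancement budget of $\sqrt{r^2-v^2}-O(s)$ and cop 2 a budget of $\sqrt{r^2-h^2}-O(s)$, whose sum is at least $r-O(s)$; the combined advance is therefore $\Theta(r)$ per round even though neither single barrier's is, the whole pursuit ends in $O(1/r)$ rounds, and your lag budget is restored. (The paper avoids advancing barriers altogether: each cop shadows one coordinate of the robber, hovering $r/3$ below, respectively to the left of, him, which makes every short move and every ``down'' respectively ``left'' move immediately losing for the robber; a potential argument on the sum of the robber's coordinates then forces capture within $1000/r$ moves.) A second, more minor gap: your assertion that the guard holds whenever the lag is $o(r)$ is true but not free. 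You must verify geometrically that a robber cannot cross a line occupied by a cop whose height matches the robber's up to $o(r)$: the robber cannot approach the line closer than $r-o(r)$ without being captured, and from that distance any crossing move lands within $o(r)$ of the cop. This is the analogue of the paper's analysis showing that moves of types T1 and T2 lead to capture, and it is where the specific geometry of the disk enters; without it, ``guarding'' is only an analogy with the grid, not a proof.
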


A further increase in the connectivity radius leads to an even denser geometric graph, so that eventually the cops and robbers game on $\RGG$ becomes quite similar to a turn-based pursuit evasion game on $[0,1]^2$. 
Such pursuit evasion games on $\R^d$ and in polygonal environments have been well studied, using winning criteria such as capture \cite{sgall, KR, bhadauria+isler} and line-of-sight visibility \cite{lavalle,guibas,isler05tro}.
 It is known  \cite{sgall, KR} that pursuers can win the capture game in $\R^d$ if and only if the evader starts in the interior of the convex hull of the initial pursuer locations. Furthermore, a single pursuer can always catch the quarry in a bounded region, such as $[0,1]^2$.  We use the dismantlable criterion of Theorem \ref{thm:pitfall} to prove that a sufficiently dense $\RGG$ also requires a single pursuer. 

\begin{theorem}
\label{thm:one}
There is a constant $K_2>0$ such that the following holds.
If $G \in \RGG$ on $[0,1]^2$ with  $r \geq K_2 \left(\log n/n\right)^{\frac15} $, then $c(G)=1$ whp.
\end{theorem}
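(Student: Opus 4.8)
The plan is to use the dismantlable characterization of Theorem~\ref{thm:pitfall}: it suffices to show that whp a sufficiently dense $G \in \RGG$ can be reduced to a single vertex by repeatedly deleting pitfalls. The guiding geometric intuition is that when $r$ is large, the closed neighborhood $\cN(x)$ of a vertex $x$ is essentially the set of vertices falling in the disk $B(x,r) \cap [0,1]^2$. If $x$ is a vertex that is \emph{deepest} in some direction -- for instance a vertex of near-minimal second coordinate among those not yet removed -- then its neighborhood disk should be contained in the neighborhood disk of a nearby vertex $y$ sitting slightly ``inward,'' so that $\cN(x) \subseteq \cN(y)$ and $x$ is a pitfall. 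The strategy is therefore to find an ordering of the vertices, from the boundary inward, so that each vertex in turn dominates into a surviving neighbor.

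First I would establish the relevant high-probability regularity properties of $\RGG$ at the stated radius. With $r \geq K_2(\log n/n)^{1/5}$, partition $[0,1]^2$ into a grid of cells of side length $\Theta(r)$ (or a slightly finer scale, say $\Theta(\sqrt{\log n / n})$, chosen so that each cell is nonempty whp). A first-moment/union-bound argument shows that whp every cell contains at least one point, and moreover that the empirical point density is uniform enough that the number of points in any disk of radius comparable to $r$ is tightly concentrated around its expectation, namely $\Theta(n r^2)$. These facts let me translate the purely combinatorial condition $\cN(x) \subseteq \cN(y)$ into the geometric containment condition $B(x,r) \cap [0,1]^2 \subseteq B(y,r) \cap [0,1]^2$ up to controlling the thin lune-shaped region near the boundary of the disks, which is where the argument is most delicate.

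Second, and this is the technical heart, I would make the pitfall step quantitative. Given a current surviving vertex $x$ that is extremal (closest to one side of the square among survivors), I want to exhibit a surviving vertex $y$ with $\|x-y\|$ small -- on the order of the cell diameter $\sqrt{\log n/n}$ -- and positioned so that $B(x,r) \subseteq B(y, r)$ except for a sliver of width $O(\|x-y\|)$. Every point of $V(G)$ lying in $B(x,r)$ but not in $B(y,r)$ would have to sit in this sliver; I must argue that whp there are \emph{no} vertices there, or that any such vertex is still within distance $r$ of $y$ by a separate estimate, so that $\cN(x) \subseteq \cN(y)$ genuinely holds. The area of the sliver is roughly $r \cdot \|x-y\| = \Theta\!\left( (\log n/n)^{1/5} \cdot (\log n/n)^{1/2}\right) = \Theta\!\left((\log n/n)^{7/10}\right)$, so the expected number of vertices in it is $n$ times this, of order $n^{3/10}(\log n)^{7/10}$, which is \emph{not} $o(1)$. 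This is the main obstacle: the naive sliver is too fat, so I cannot simply declare it empty. The exponent $1/5$ in the hypothesis is presumably tuned precisely so that a cleverer choice of $y$ (pushing it in the right direction so the lune shrinks faster, or exploiting the boundary of the square to clip the sliver) makes the leftover region provably empty whp, and reconstructing this optimization is where the real work lies.

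Finally, assuming the quantitative pitfall step succeeds, I would assemble the dismantling order. Process vertices in order of increasing distance to the nearest side of $[0,1]^2$ (equivalently, peel from the boundary inward), removing each as a pitfall dominated by a not-yet-removed inward neighbor; the extremal position of the current vertex guarantees the dominating disk genuinely contains its own. One must check that the dominating vertex $y$ is always still present when $x$ is removed -- which follows because $y$ lies strictly inward of $x$ and so is processed later -- and that the very last few vertices near the center form a cop-win configuration (a clique, or directly dismantlable) by the density estimate. By Theorem~\ref{thm:pitfall} this exhibits $c(G)=1$ whp, completing the proof.
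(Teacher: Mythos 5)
Your proposal has a genuine gap, and it sits exactly where you flag it: the quantitative pitfall step is never established, and the route you sketch for it cannot be repaired. Requiring the whole sliver $B(x,r)\setminus B(y,r)$ to contain no vertex of $G$ amounts to full-neighborhood domination, $\cN(x)\subseteq\cN(y)$ in the entire graph, and that essentially never happens at this density: for equal radii, $B(x,r)\subseteq B(y,r)$ forces $y=x$, so any candidate $y\neq x$ leaves a sliver of area $\Theta(r\norm{x-y})$; since the nearest available vertex is typically at distance $\Theta(n^{-1/2})$ (and is only guaranteed within distance $\Theta(\sqrt{\log n/n})$), the sliver has expected occupancy at least of order $r\sqrt{n}\to\infty$. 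Your own computation of $n^{3/10}(\log n)^{7/10}$ is this same fact. No cleverer placement of $y$ escapes it, because the obstruction is the requirement itself, not the choice of $y$.

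What the paper does instead, and what your sketch is missing, is a relaxation plus a quantifier inversion. First, fix the elimination order globally as distance to the \emph{center} $c=(\frac12,\frac12)$, decreasing; then when the outermost surviving vertex $x$ is processed, every vertex outside $B(c,\norm{x-c})$ is already gone, so the dominating vertex $y$ only needs $B(y,r)\supseteq B(x,r)\cap B(c,\norm{x-c})$ — a far weaker demand than containing all of $B(x,r)$. Second, rather than nominating a single candidate $y$ and asking that its sliver be empty, the paper characterizes the \emph{entire region} of valid dominators, $W(x,c;r)=\{z: B(z,r)\supseteq B(x,r)\cap B(c,\norm{x-c})\}$, shows it equals $B(p_1,r)\cap B(p_2,r)$ where $p_1,p_2$ are the corners of the lens, and lower-bounds its area by $\Omega(r^5)$; a first-moment bound then shows the expected number of vertices $x_i$ with $\norm{x_i-c}\ge r/2$ admitting no vertex in this dominator region is at most $n\left(1-\Omega(r^5)\right)^{n-1}\le n e^{-\Omega(nr^5)}=o(1)$ exactly when $r\ge K_2(\log n/n)^{1/5}$ — this is where the exponent $1/5$ comes from. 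Finally, your specific ordering (distance to the nearest side) would defeat even this corrected argument: near the midpoint of a side the survivors' region is locally a half-plane, the relevant lens $B(x,r)\cap\{\text{half-plane}\}$ contains two points at mutual distance $2r$, and the only radius-$r$ ball containing both is $B(x,r)$ itself, so the dominator region degenerates to a point. The curvature of the circles centered at $c$ is precisely what keeps that region nondegenerate, of area $\Theta(r^2\cdot r^3)=\Theta(r^5)$.
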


\noindent
We note that Theorem \ref{thm:one} was proven independently by Alon and Pra{\l}at \cite{alon} using a graph pursuit algorithm in the spirit of  \cite{sgall, KR}.

Finally we also give a lower bound of the cop number of $\RGG$ proving that 
some random geometric graphs beyond the connectivity threshold require at least two cops.
This answers a question of Alon~\cite{alon}.
\begin{theorem}
\label{thm:multi}
There is a constant $K_3>0$ such that the following holds.
If $G \in \RGG$ on $[0,1]^2$ with  $r \leq K_3 \log n / \sqrt{n}$, then $c(G) > 1$ whp.
\end{theorem}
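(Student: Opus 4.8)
The plan is to use Theorem~\ref{thm:pitfall} in its contrapositive form: since $c(G)=1$ exactly when $G$ is dismantlable, it suffices to describe a strategy that lets the robber evade a single cop forever, whp. The engine of the strategy is the easy half of Theorem~\ref{thm:pitfall}: if a vertex $w$ is \emph{not} a pitfall then for every neighbour $u$ of $w$ there is a vertex $w'\in\cN(w)\setminus\cN(u)$, so a robber standing on $w$ can answer a cop who steps onto $u$ by fleeing to $w'$, which then lies at distance at least $2$ from the cop. Hence the robber is safe as long as he can always remain on non-pitfall vertices and escape into their uncovered neighbourhoods. The first step is therefore to describe the pitfalls geometrically. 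A point $w$ is a pitfall precisely when some data point $u$ satisfies $D(u,r)\supseteq D(w,r)\cap\{x_1,\dots,x_n\}$; writing $\eps=\norm{w-u}$, this requires the lune $D(w,r)\setminus D(u,r)$, of area $\Theta(r\eps)$, to contain no point. At $r\le K_3\log n/\sqrt n$ a typical interior vertex has its nearest neighbour at distance $\Theta(1/\sqrt n)$, so the corresponding lune carries $\Omega(\log n)$ points and $w$ is not a pitfall; I expect to show that the only pitfalls are vertices lying in a thin collar at the boundary of $[0,1]^2$ or possessing an atypically close neighbour, and that these form a sparse, well spread out set of density $O(1/\log^2 n)$.

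Call the non-pitfall vertices \emph{good}. The robber will stay on good vertices and maintain, after each of his moves, that $\dist(\text{robber},\text{cop})\ge 2$. When the cop steps to a vertex $u$ adjacent to the robber's vertex $w$, the available escapes are the data points in the lune $D(w,r)\setminus D(u,r)$ on the far side of $w$ from $u$, and I must guarantee that at least one of them is again good. When the cop approaches from a neighbour $u$ at genuine distance $\Theta(r)$ the lune is fat, carrying $\Omega(\log n)$ points, and since pitfalls have density only $O(1/\log^2 n)$, a union bound over the $O(n^2)$ relevant pairs $(w,u)$ should show that whp no such lune is entirely bad; the probability that a fat lune misses every good vertex is roughly $(\log n)^{-\Omega(\log n)}=o(n^{-2})$. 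Together with the existence of a good starting vertex far from the cop, this makes the escape move available whenever the cop approaches from a realistic distance, and the invariant $\dist(\text{robber},\text{cop})\ge 2$ then propagates from one turn to the next.

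The hard part will be the global control, and it is of two kinds. First, the cop might approach $w$ through an atypically close data point $u$, making $D(w,r)\setminus D(u,r)$ thin and possibly free of good vertices; second, the cop might try to herd the robber into the boundary collar, where every escape lune points outward and is nearly empty. This is exactly the step that must use the sparsity $r\le K_3\log n/\sqrt n$, because for larger $r$ the graph becomes dismantlable by Theorems~\ref{thm:two} and~\ref{thm:one}, so no purely local escape argument can suffice. I would handle it by tiling $[0,1]^2$ into cells of side $\Theta(r)$, showing whp that every cell and every two adjacent cells contain the expected $\Theta(\log^2 n)$ points, so that the good region is locally grid-like with many parallel escape corridors between neighbouring cells; a single cop, whose reach $\cN(u)$ is one disk of radius $r$, can block only boundedly many corridors at once, so the robber can always slip transversally, keep a buffer that denies the cop a thin-lune approach, and maintain a positive distance from the boundary. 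The genuine work lies in balancing the two competing probabilistic estimates — the empty-lune probability must be $o(n^{-2})$ for the union bound, while the tiling cells must be densely occupied — and it is this balance that I expect to fix both the exponent and the constant $K_3$ in $r\le K_3\log n/\sqrt n$.
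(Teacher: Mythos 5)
Your reduction is sound in principle---a robber who evades one cop forever certifies $c(G)>1$, just as non-dismantlability would via Theorem~\ref{thm:pitfall}---but the entire mathematical content of your proof sits in the step you defer as ``the hard part,'' and that step is both unproven and, in places, quantitatively false. First, pitfalls are too numerous to be handled as a negligible defect: the probability that a fixed vertex is dominated by some very close neighbour is of order $1/(nr^2)$, so the expected number of dominated vertices is of order $1/r^{2}\geq n/(K_3\log n)^{2}$, scattered over the square. Equivalently, thin lunes containing no vertex at all (let alone a good one) occur everywhere whp, so your union bound can only ever cover ``fat'' lunes, and the theorem then rests entirely on the claim that the robber can forever deny the cop a thin-lune approach while simultaneously staying away from the boundary collar. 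That is a genuine pursuit--evasion argument in which the cop chooses where to create thin lunes, and you give no proof of it, only the assertion that a corridor structure should supply one. Second, the corridor structure fails on part of the stated range: the theorem covers all $r\leq K_3\log n/\sqrt n$, and for $r=c\sqrt{\log n/n}$ with $c<1$ a small constant (still above the connectivity threshold the paper reduces to), a cell of side $\Theta(r)$ has expected occupancy $c^{2}\log n$ and the expected number of \emph{empty} cells is about $n^{1-c^{2}}/(c^{2}\log n)\to\infty$, so ``every cell contains $\Theta(\log^{2}n)$ points'' is false there and the ``many parallel corridors'' picture collapses. There is also a dependence problem in your estimate $(\log n)^{-\Omega(\log n)}$: whether the points inside one lune are good is determined by a single shared local configuration, so per-point badness probabilities cannot simply be multiplied.

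The paper avoids all of these difficulties by never analyzing play: it exhibits a \emph{static} certificate of non-dismantlability. Whp there exist vertices $x_{j_0},\dots,x_{j_{N-1}}$, with $N=\lceil(\pi nr^{2})^{1/4}\rceil$, lying near the corners of a tiny regular $N$-gon of side length just under $r$, such that each $x_{j_i}$ is the \emph{unique} common neighbour of $x_{j_{i-1}}$ and $x_{j_{i+1}}$, while $x_{j_{i-1}}$ and $x_{j_{i+1}}$ are themselves non-adjacent. No vertex of such a cycle can ever be dominated, no matter which other vertices have already been removed, so the dismantling process never deletes any $x_{j_i}$, the graph is not dismantlable, and Theorem~\ref{thm:pitfall} gives $c(G)>1$. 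The hypothesis $r\leq K_3\log n/\sqrt n$ enters exactly once and cleanly: the probability that a single placement of the polygon is good is roughly $\exp\left(-O\left(\sqrt{nr^{2}}\right)\right)\geq n^{-1/2}$, and since $n^{1-o(1)}$ disjoint placements fit inside $[0,1]^2$ and are independent after Poissonization (undone at the end by conditioning on $|Z-n|\leq K\sqrt n$), at least one placement is good whp; the regime $r<\tfrac12\sqrt{\log n/n}$ is dispatched separately because the graph is then disconnected whp. If you want to salvage your approach, the efficient repair is to abandon the dynamic escape strategy and instead hunt for a static obstruction of this kind, which is what your local lune analysis is implicitly circling around.
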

We do not know whether any of our multiple  cop bounds are tight. 
We are particularly hopeful that the bound for arbitrary geometric graphs can be improved.

\section{Notational conventions}
\label{sec:notation}

We begin by setting some notation.
For $x \in \R^2$ and $r \in \R$, define the ball $B(x,r) = \{ y \in \R^2 : \norm{x-y} \leq r \}$. 

In the standard formulation of cops and robbers, the cops are first to act in each round.  In continuous pursuit evasion games, the evader is usually first to act. 
The distinction is merely notational, and we choose to  view the robber as the first to act in each round. This leads to a more intuitive notation for the game state in our proofs below. Indeed, our cops are always reacting to the robber's previous move (which was made according to some unknown strategy), so it is useful to group these two moves together in a single round. 

We formally describe the game of cops and robbers using this notational convention. Before the game begins, the $\ell$ cops place themselves on the graph at vertices $C_1^0, \ldots , C_{\ell}^0$. Then the game begins. In the first round, the robber chooses his location $R^1$. Next the cops begin the chase, moving to vertices  $C_1^1, \ldots , C_{\ell}^1$ where $C_{j}^1 \in \cN(C_j^0)$. For $i \geq 2$, the $i$th round starts in configuration $(R^{i-1}, C_1^{i-1}, \ldots , C_{\ell}^{i-1})$. The robber is first to act, leading to configuration $(R^{i}, C_1^{i-1}, \ldots , C_{\ell}^{i-1})$ where $R^i \in \cN(R^{i-1})$ at the start of the $i$th cop turn. Next, the cops move simultaneously to yield configuration $(R^{i}, C_1^{i}, \ldots , C_{\ell}^{i})$ at the end of the $i$th round. The cops win if $C_k^i = R^i$ for some finite $i,k$. Otherwise the robber wins.

Finally, we note that the winning cop criteria has an equivalent formulation. Namely, the cops win if there are finite $i,k$ such that $R^i \in  \cN(C_k^{i-1})$. Indeed, $C_k$ would subsequently capture the evader on his $i$th move, achieving $C_k^i=R^i$.  Of course, if $R^i \notin  \cN(C_k^{i-1})$ for all $k$, then the robber cannot be caught in the current round, and his evasion continues.

\section{Geometric graphs}
\label{sec:nine}

In this section, we prove Theorem \ref{thm:nine}. 
Let $G=G(x_1, \ldots x_n;r)$ be a fixed geometric graph. 
We say that a cop $C$ \emph{controls} a path $P$ if whenever the robber steps onto $P$, then he steps onto $C$ or is caught by $C$ on his responding move. Let $\mathrm{diam}(G)$ denote the diameter of the graph. Aigner and Fromme \cite{aigner+fromme} prove the following.

\begin{lemma}[\cite{aigner+fromme}]
\label{lemma:control}
Let $G$ be any graph, $u,v \in V(G)$, $u \neq v$ and $P= \{u=v_0, v_1, \ldots v_s = v \}$ a shortest path between $u$ and $v$. A single cop $C$ can control $P$ after at most $\mathrm{diam}(G) + s$ moves.
\end{lemma}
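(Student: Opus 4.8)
The plan is to have the single cop first march to the endpoint $v_0 = u$ and then permanently ``track'' the robber's projection onto $P$. Since the cop can reach $v_0$ from its starting vertex along a shortest path in at most $\diam(G)$ moves, I would spend the first (at most) $\diam(G)$ rounds simply walking the cop to $v_0$, ignoring the robber entirely: control need only be in force once the whole maneuver is complete, so the robber's behaviour during setup is irrelevant. For the second phase I would introduce the projection $\pi(w) := \min(\dist(w,v_0),\,s)$ and aim to maintain, after each cop move, the invariant that the cop sits at $v_{\pi(R)}$, where $R$ is the robber's current vertex. Two elementary properties of $\pi$ drive everything. First, because $P$ is a geodesic its initial segment $v_0 \dots v_j$ is itself a shortest path, so $\dist(v_j,v_0)=j$ and hence $\pi(v_j)=j$ for every $0 \le j \le s$; thus if the robber ever stands on $P$, the invariant places the cop on the very same vertex. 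Second, $\pi$ is $1$-Lipschitz along edges: a single robber step changes $\dist(\cdot,v_0)$ by at most $1$, and capping by $\min(\cdot,s)$ preserves this, so $|\pi(R')-\pi(R)| \le 1$ whenever $R'$ is reached from $R$ in one move.

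Granting the invariant, control is immediate. In the synced state the cop stands at $v_c$ with $c=\pi(R)$; when the robber moves to some $R'$, the new target $\pi(R')$ differs from $c$ by at most $1$, so $v_{\pi(R')} \in \cN(v_c)$ and the cop can re-sync in one step. In particular, if the robber steps onto $P$, say to $v_j$, then $\pi(v_j)=j$ with $v_j \in \cN(v_c)$, so by the equivalent capture criterion noted in Section~\ref{sec:notation} the robber is caught on the cop's reply (or has stepped directly onto the cop). This is exactly the definition of controlling $P$, and it persists for all subsequent rounds.

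It remains to establish the invariant during the second phase, and this is where I expect the real work to lie. Starting from the cop at $v_0$ (index $0$) with the robber giving target $t_0 = \pi(R) \in \{0,\dots,s\}$, I would let the cop step one vertex along $P$ toward $v_{t_k}$ each round, with the rule ``move toward the target but stop on reaching it'' so that the cop never overshoots. Writing $c_k$ for the cop's index and $t_k$ for the target after the $k$th round of this phase, consider the potential $\phi_k = t_k - c_k$, which starts at $\phi_0 = t_0 \ge 0$. While the cop is not yet synced it advances by $+1$ each round, so $\phi_k - \phi_{k-1} = (t_k - t_{k-1}) - 1 \le 0$ by the Lipschitz bound; hence $\phi_k$ is non-increasing, and before syncing $c_k = k$. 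Since the target never exceeds $s$, the cop's index must catch the capped target by round $s$ at the latest, so syncing costs at most $s$ rounds and the two phases together use at most $\diam(G)+s$ moves. The one point demanding care is precisely this crossing argument: because $\phi$ can drop by $2$ in a single round it could in principle skip the value $0$, so the ``stop on reaching the target'' rule is essential, as it forces the cop to land exactly on $v_{t_k}$ rather than overshoot; together with the cap at $s$ (which prevents the target from fleeing indefinitely), this guarantees an exact match within $s$ rounds, after which the Lipschitz bound keeps the cop synced forever.
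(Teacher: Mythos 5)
Your proof is correct and follows essentially the same route as the paper, which simply cites Aigner--Fromme and glosses the argument as ``at most $\diam(G)$ moves to reach $P$, then at most $s$ moves to take control'': your two phases match this exactly, and your projection $\pi(w)=\min(\dist(w,v_0),s)$ together with the 1-Lipschitz/sync analysis is precisely the standard retraction-onto-a-geodesic argument behind that gloss. The care you take with the ``stop on reaching the target'' rule correctly handles the only delicate point (the potential skipping zero), so no gap remains.
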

It takes $C$ at most $\mathrm{diam}(G)$ moves to reach $P$, and then at most $s$ moves to take control of $P$.  We have the following simple corollary which will be useful for geometric graphs. 

\begin{cor}
\label{cor:3control}
Suppose that there are three cops $C_-, C, C_+$ chasing robber $R$ on $G$.
Consider a shortest $(u,v)$-path $P=\{u=v_0, v_1, \ldots,  v_s = v \}$. After $k \leq \mathrm{diam}(G) + 2s$ moves, the cop $C$  controls $P$, and $(C_-^k, C^k, C_+^k) = (v_{i-1}, v_i, v_{i+1})$,
where  we set $v_{-1} = u$ and $v_{s+1}=v$.
\end{cor}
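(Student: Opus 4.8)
The plan is to let the central cop $C$ carry out exactly the guarding strategy behind Lemma~\ref{lemma:control}, while the flanking cops $C_-$ and $C_+$ ride alongside it so as to finish one vertex behind and one vertex ahead on $P$. Recall how a single cop controls a geodesic $P=\{u=v_0,\dots,v_s=v\}$: it occupies the \emph{guard vertex} $v_{g(R)}$, where the guard index $g(R)$ is essentially the clamped distance $\min\{d(R,u),s\}$ from the robber to the endpoint $u$. Since $P$ is a shortest path and the robber moves to a neighbour each round, $d(R,u)$ changes by at most one per round; hence $g(R)$ is $1$-Lipschitz along the robber's walk, and once $C$ sits at $v_{g(R)}$ it can track the guard vertex with a single step each round. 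Reaching the path costs at most $\mathrm{diam}(G)$ moves and walking into the guarding position costs at most $s$ further moves, which is the content of Lemma~\ref{lemma:control}.

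First I would move all three cops onto $P$ at three consecutive vertices, padding with stationary moves so that they arrive simultaneously; this costs at most $\mathrm{diam}(G)$ moves. From that point I would have $C$ execute the guarding strategy --- reaching and then holding $v_{g(R)}$ within at most $s$ additional moves --- while $C_-$ and $C_+$ copy every move of $C$ shifted by one index, so that at all times they occupy $v_{g(R)-1}$ and $v_{g(R)+1}$. With the convention $v_{-1}=u$ and $v_{s+1}=v$ these two targets are well defined even when the guard sits at an endpoint, and since $g(R)\pm 1$ (clamped to $\{0,\dots,s\}$) is again $1$-Lipschitz, the flanking cops can indeed maintain the shift by moving at most one step per round. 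Once $C$ reaches $v_i$ with $i=g(R)$ and controls $P$, the triple is exactly $(C_-^k,C^k,C_+^k)=(v_{i-1},v_i,v_{i+1})$, and the whole manoeuvre has taken at most $\mathrm{diam}(G)+2s$ moves; the extra $s$ beyond Lemma~\ref{lemma:control} is the slack needed to spread the cops into consecutive positions and comfortably covers the walk into formation.

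The step I expect to be the crux is the tracking of the two flanking cops: if $C_-$ and $C_+$ merely chased their targets independently, the guard index could drift away by one each round and naive pursuit might never close the gap. The clean remedy is to tie the flanking cops rigidly to $C$ --- they mirror $C$'s moves rather than pursue a moving target on their own --- so that they are adjacent to $C$ on $P$ from the moment all three reach the path, and thereafter the three indices $g(R)-1,\,g(R),\,g(R)+1$ move in lockstep because each is a $1$-Lipschitz function of the robber's position. This simultaneously preserves the consecutive formation and leaves $C$'s guarding behaviour identical to that of Lemma~\ref{lemma:control}, so $C$ controls $P$ for all subsequent rounds.
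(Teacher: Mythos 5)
Your proof is correct, but it runs in the opposite order from the paper's and leans on different machinery. The paper starts with all three cops \emph{colocated} and moving as one until $C$ controls $P$ (invoking Lemma~\ref{lemma:control} as a black box, at most $\mathrm{diam}(G)+s$ moves), and only then spreads the flankers out: while $C_+$ is colocated with $C$, if $C$ stays put then $C_+$ steps forward, if $C$ steps backward then $C_+$ stays, and otherwise both step forward; since $C$ cannot step forward $s$ times in a row, $C_+$ (and symmetrically $C_-$) detaches within $s$ further rounds while control is maintained throughout, giving the stated $\mathrm{diam}(G)+2s$. You instead get into the consecutive-vertex formation \emph{first} (at most $\mathrm{diam}(G)$ moves) and then acquire control with the flankers mirroring $C$ in lockstep; the price is that you must open up the proof of Lemma~\ref{lemma:control} --- you need to know that the controlling strategy is a pursuit of the $1$-Lipschitz guard index $\min\{d(R,u),s\}$ along $P$, one step per round, and that this pursuit terminates within $s$ moves from any on-path start --- whereas the paper's splitting manoeuvre only needs the (weaker, though still internal) fact that $C$'s controlling moves stay on $P$. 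What you buy is a cleaner dynamic (three indices moving in lockstep) and in fact the better bound $\mathrm{diam}(G)+s$; what the paper buys is modularity with respect to Lemma~\ref{lemma:control}. One small imprecision in your writeup: during the acquisition phase the flankers occupy the vertices adjacent to $C$'s \emph{current} position, not $v_{g(R)\pm 1}$, so the phrase ``at all times they occupy $v_{g(R)-1}$ and $v_{g(R)+1}$'' only becomes true once $C$ has caught the guard vertex; this does not affect the argument.
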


\bpf
Start with the three cops colocated on any vertex of $P$. The cops attain this controlling configuration in two phases. In phase one, cops move as one until they control the path, as in Lemma \ref{lemma:control}. In phase two,  $C$ remains in control of the path while $C_-,C_+$ obtain their proper positions within $s$ moves. Assume that until round $j \geq 1$ of phase two, $C_+$ is colocated with $C$.  If $C$ stays put on $v_i$ in round $j$, then $C_+$ moves to $v_{i+1}$. If $C$ moves from $v_i$ to $v_{i-1}$ then $C_+$ stays put on $v_{i}$. Otherwise, both $C$ and $C_+$ move to $v_{i+1}$. After at most $s$ rounds, $C$ must either stay put or move left, and $C_{+}$ attains his proper position. Similarly, $C_-$ attains his position within $s$ rounds.
\epf

Geometric graphs are frequently non-planar. Because of  crossing edges, simply keeping $R$ from stepping onto $P$ does not necessarily prevent him from moving from one side of $P$ to the other. We say that $R$ \emph{crosses} $P$ at time $t$ if the closed segment $R^{t-1}R^t$ has nonempty intersection with the closed segments corresponding to the edges of $P$. The additional guards flanking $C$  ensure that once the three cops are positioned as in Corollary \ref{cor:3control},  $R$ cannot cross $P$. On a geometric graph, we say that a set of cops \emph{patrols} a path $P$ if they control $P$ and whenever $R$ crosses $P$, he is caught in the subsequent cop move. 

\begin{lemma}
\label{lemma:patrol}
Let $P=\{ v_0, \ldots , v_t \}$ be a shortest path on a geometric graph $G(x_1, \ldots , x_n; r)$. Suppose that the cops $C_-, C, C_+$ are located on $v_{i-1}, v_i, v_{i+1}$ respectively, and that cop $C$ controls $P$.  Then these three cops patrol $P$. 
\end{lemma}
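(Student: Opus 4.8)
The plan is to isolate the one genuinely geometric obligation. Control of $P$ is given, so by definition the robber can never step onto a vertex of $P$ without being caught; what I must still verify is that \emph{physically crossing} an edge of $P$ is punished on the following cop move. I would first record the invariant that the patrolling configuration maintains: the Aigner--Fromme mechanism behind Lemma \ref{lemma:control} and Corollary \ref{cor:3control} keeps the controlling cop $C$ at the projection $\proj(R)=v_i$ of the robber onto $P$ (the nearest path vertex in the graph metric), with $C_-,C_+$ flanking at $v_{i-1},v_{i+1}$, and this projection shifts by at most one index whenever the robber traverses a single edge---this is exactly the property that lets the flanking configuration be preserved. I would also note that immediately before the robber moves he lies in $\cN$ of none of the three cops, since otherwise he would already have been captured under the winning criterion of Section \ref{sec:notation}.

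The key step is a short geometric estimate. Suppose the robber crosses $P$ on the move $R^{\tau-1}\to R^\tau$, his move segment meeting the path edge $v_jv_{j+1}$ at a point $p$. Since $p$ lies on $R^{\tau-1}R^\tau$, a segment of length at most $r$, we have $\norm{R^{\tau-1}-p}+\norm{p-R^\tau}\le r$, hence $\min(\norm{R^{\tau-1}-p},\norm{R^\tau-p})\le r/2$; since $p$ also lies on $v_jv_{j+1}$, an edge of length at most $r$, we get $\min(\norm{v_j-p},\norm{v_{j+1}-p})\le r/2$. Picking $R^\ast\in\{R^{\tau-1},R^\tau\}$ and $v^\ast\in\{v_j,v_{j+1}\}$ realizing these two minima, the triangle inequality through $p$ gives $\norm{R^\ast-v^\ast}\le r$; that is, one of the robber's two positions is genuinely adjacent in $G$ to one endpoint of the crossed edge.

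Finally I would close with a two-line case analysis. If $R^\ast=R^{\tau-1}$, then $R^{\tau-1}$ is adjacent to the path vertex $v^\ast$, so $d_G(R^{\tau-1},v_i)=d_G(R^{\tau-1},\proj(R^{\tau-1}))\le d_G(R^{\tau-1},v^\ast)=1$, forcing $R^{\tau-1}\in\cN(v_i)=\cN(C)$ and contradicting that the robber was still in play. Hence $R^\ast=R^\tau$, so $R^\tau$ is adjacent to $v^\ast$; then $\proj(R^\tau)$ lies within graph distance $1$ of $R^\tau$, giving $R^\tau\in\cN(\proj(R^\tau))$, while one-index stability forces $\proj(R^\tau)\in\{v_{i-1},v_i,v_{i+1}\}$---precisely the cops' vertices. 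The cop sitting on $\proj(R^\tau)$ then captures the robber on the next move, which is what patrolling demands. I expect the geometric estimate to be the crux: the real work is converting a mere segment crossing into an honest graph adjacency by using that both the robber's step and each path edge have length at most $r$, and then pairing this adjacency with the one-index stability of the projection so that the vertex in question is actually guarded.
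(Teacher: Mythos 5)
Your geometric estimate is correct and is actually cleaner than the corresponding step in the paper: the observation that a crossing point $p$ lies on two segments of length at most $r$, so that $\min\bigl(\norm{R^{\tau-1}-p},\norm{R^\tau-p}\bigr)\leq r/2$ and $\min\bigl(\norm{v_j-p},\norm{v_{j+1}-p}\bigr)\leq r/2$, giving an honest adjacency $\norm{R^\ast-v^\ast}\leq r$, is a neat way to localize the crossing (the paper localizes via a rougher ``within $2r$'' argument). However, the reduction from this adjacency to capture has a genuine gap. You invoke an invariant that the controlling cop sits at $\proj(R)$, the nearest path vertex to the robber in the graph metric, and that this projection shifts by at most one index per robber move. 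Neither claim is available: the hypothesis ``$C$ controls $P$'' is purely behavioral (it says only that a robber who steps onto $P$ is caught), and it places no constraint tying $v_i$ to the robber's position; moreover, the Aigner--Fromme mechanism behind Lemma \ref{lemma:control} and Corollary \ref{cor:3control} does not maintain ``$C$ = nearest path vertex'' --- its invariant is $d_G(R,v_k)\geq |i-k|$ for all $k$, which only places $C$ within a factor of the nearest vertex --- and the nearest-path-vertex map is not index-stable at all: when the path doubles back on itself geometrically, the nearest path vertex can jump many indices in a single robber move.

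Here is the concrete case your argument cannot close. Suppose the robber crosses the edge $v_{i+1}v_{i+2}$ and lands at $R^\tau$ with $\norm{R^\tau-v_{i+2}}\leq r$ but $R^\tau\notin B(v_{i-1},r)\cup B(v_i,r)\cup B(v_{i+1},r)$. Your estimate is perfectly consistent with this (it is satisfied by $R^\ast=R^\tau$, $v^\ast=v_{i+2}$), yet no cop can respond: the cops stand on $v_{i-1},v_i,v_{i+1}$ and the robber is outside all three of their balls, while $\proj(R^\tau)=v_{i+2}$ is unguarded --- your ``one-index stability'' would force $\proj(R^\tau)\in\{v_{i-1},v_i,v_{i+1}\}$, but that is exactly the false step. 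Ruling out this scenario is the real content of the lemma, and it requires an \emph{asymmetric} geometric fact that your symmetric min-distance trick cannot deliver: control forces $\norm{R^{\tau-1}-v_{i+2}}>r$, and non-capture by $C_+$ forces $\norm{R^\tau-v_{i+1}}>r$; then in the convex quadrilateral $R^{\tau-1}v_{i+1}R^\tau v_{i+2}$ (whose diagonals are the crossing segments) the angles at $R^{\tau-1}$ and at $R^\tau$ are both acute (an obtuse angle there would make the edge $v_{i+1}v_{i+2}$, of length at most $r$, strictly longer than a side already known to exceed $r$), so one of the angles at $v_{i+1}$ or $v_{i+2}$ exceeds $\pi/2$, and the obtuse triangle forces $\norm{R^{\tau-1}-R^\tau}>r$, contradicting that the robber moved along one edge. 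This is exactly the paper's argument; your proof needs this (or an equivalent) to handle crossings of the two ``boundary'' edges $v_{i\pm1}v_{i\pm2}$ of the guarded zone, where the adjacency produced by your estimate may point at a vertex with no cop on it.
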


\bpf
 If the robber steps onto $P$ then $C$ will capture him.  Suppose that the robber can cross $P$ without losing the game, and does so from position $R^{t}$ to $R^{t+1}$. 
 We characterize some constraints on the location of $R^{t}$. Consider the configuration $(R^{t}, C_-^{t-1}, C^{t-1}, C_+^{t-1})$ prior to robber's crossing. This  occurs in round $t$, after the robber move but before the cop moves. At this point, the cops are positioned on three successive vertices of $P$.
 We claim that  $R^{t} \notin B(C^{t},r)$. Indeed, if $C^{t-1} = C^{t}$ (so that the cops are stationary in round $t$), then $C$ can actually catch $R$ at time $t$, a contradiction. Otherwise  $C^{t} \in \{ C_-^{t-1}, C_+^{t-1} \}$, so one of these flanking cops can catch $R$ at time $t-1$, also a contradiction.
 
Next, we observe that  the robber cannot be far from the cops. 
Let $ (R^{t}, C_-^{t-1}, C^{t-1}, C_+^{t-1}) = (R^{t}, v_{i-1}, v_i, v_{i+1}).$
First of all,  $R^{t} \notin B(v_{i-2},r) \cup B(v_{i+2},r)$. Indeed, if $R^{t}$ is close to  either of $v_{i-2}, v_{i+2}$ then $R$ could step onto that vertex in round $t+1$  without being caught by $C$, contradicting the fact that $C$ controls $P$. Secondly, $R^t$ cannot be within $2r$ of any path vertex $v_j$ where $|i-j| >2$ by a similar argument. We conclude that the robber must cross $P$ between $v_{i-2}$ and $v_{i+2}$. 
The region forbidden to $R^t$ along this subpath is shown in Figure \ref{fig:nocross}(a).

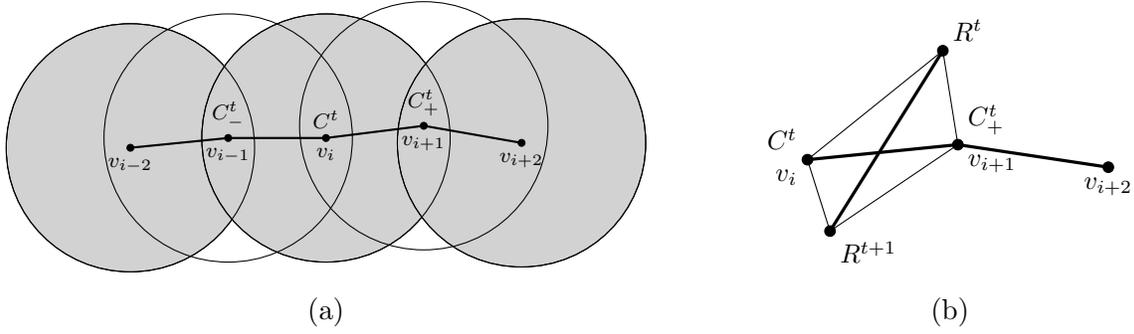
\begin{figure}[ht]
\begin{center}
\begin{tabular}{ccc}

\begin{tikzpicture}[scale=.65]
\tikzstyle{every node}=[font=\scriptsize]

\path(-4,-.2) coordinate (V0);
\path (-2,0) coordinate (V1) ;
\path (0,0) coordinate (V2);
\path (2, .25) coordinate (V3);
\path (4, -.1) coordinate (V4);

\draw[fill=gray!35] (V0) circle (1in);
\draw[fill=gray!35] (V2) circle (1in);
\draw[fill=gray!35] (V4) circle (1in);

\draw[thick]  (V0) -- (V1) -- (V2) -- (V3) -- (V4);

\foreach \j in {0,1,2,3,4}
{
\draw[fill] (V\j) circle (2pt);
\draw (V\j) circle (1in);
}

\node[above] at (V1) {${C_-^t}$};
\node[above] at (V2) {${C^t}$};
\node[above] at (V3) {${C_+^t}$};

\node[below] at (V0) {$v_{i-2}$};
\node[below] at (V1) {$v_{i-1}$};
\node[below] at (V2) {$v_{i}$};
\node[below] at (V3) {$v_{i+1}$};
\node[below] at (V4) {$v_{i+2}$};

\end{tikzpicture}

&
\hspace{.25in}
&

\begin{tikzpicture}[scale=1]
\tikzstyle{every node}=[font=\small]

\path (0,0.05) coordinate (V2);
\path (2, .25) coordinate (V3);
\path (4, -.05) coordinate (V4);

\path (1.8, 1.5) coordinate (R1);
\path (.3, -.9) coordinate (R2);

\draw[very thick]   (V2) -- (V3) -- (V4);

\foreach \j in {2,3,4}
{
\draw[fill] (V\j) circle (2pt);

}

\draw[fill] (R1) circle (2pt);
\draw[fill] (R2) circle (2pt);

\draw[very thick] (R1) -- (R2);

\draw (R1) -- (V2) -- (R2) -- (V3) -- cycle;

\node[above right] at (V3) {${C^{t}_+}$};
\node[above left] at (V2) {${C^{t}}$};

\node[below left] at (V2) {$v_{i}$};
\node[below right] at (V3) {$v_{i+1}$};
\node[below] at (V4) {$v_{i+2}$};

\node[above right] at (R1) {$R^{t}$};
\node[below right] at (R2) {$R^{t+1}$};

\end{tikzpicture}

\\
(a) && (b) \\

\end{tabular}
\caption{(a) The robber must cross between $v_{i-2}$ and $v_{i+2}$, but $R^{t}$  cannot lie in the  gray region
$B(v_{i-2},r) \cup B(v_i, r) \cup B(v_{i+2},r)$. (b) The geometry of the quadrilateral $v_i R^{t} v_{i+1} R^{t+1}$ shows that the robber cannot cross $P$ at edge $v_{i}v_{i+1}$ without ending in $B(C^{t},r) \cup B(C_+^{t},r)$.}
\label{fig:nocross}
\end{center}
\end{figure}

Without loss of generality, assume  that $R$ crosses $P$ so that $R^{t}R^{t+1}$ intersects $v_iv_{i+1}$ or $v_{i+1}v_{i+2}$.  Now  $R^{t+1} \notin  B(v_i, r) \cup B(v_{i+1}, r)$; otherwise either $C$ or $C_+$ can immediately catch him. 
Suppose that $R^{t}R^{t+1}$ crosses $v_{i}v_{i+1}$ where  $R^{t} \notin B(v_i, r) \cup B(v_{i+2},r)$ and $R^{t+1} \notin B(v_{i},r) \cup B(v_{i+1},r)$, as shown in  Figure \ref{fig:nocross}(b). 
We have $\norm{v_{i} - v_{i+1}} \leq r$ and $\norm{ R^{t} - v_{i}} > r$. This means that the angle $\measuredangle v_{i} R^{t} v_{i+1} < \pi/2$; otherwise in the triangle $v_{i} v_{i+1} R^{t}$, this obtuse angle forces  $r \geq \norm{v_{i} - v_{i+1}} > \norm{v_{i} - R^{t}} > r$, a contradiction.  Likewise, since $\norm{R^{t+1} - v_{i+1}} > r$, we must have  $ \measuredangle v_{i} R^{t+1} v_{i+1} < \pi/2$. Therefore $\max \{ \measuredangle R^{t} v_{i} R^{t+1}, \measuredangle R^{t} v_{i+1} R^{t+1} \} > \pi/2$, and the resulting obtuse triangle forces  $ \norm{R^{t} - R^{t+1}} > r$, a contradiction. Therefore $R$ cannot cross $P$ by crossing $v_{i}v_{i+1}$. An identical argument, replacing $v_i$ with $v_{i+2}$,  shows that $R$ cannot cross $v_{i+1}v_{i+2}$. Therefore, $R$ cannot cross $P$. 
\epf

We now prove that if $G$ is a connected geometric graph in $\R^2$, then $c(G) \leq 9.$

\bpfof{Theorem \ref{thm:nine}}
The proof is a direct adaptation of the Aigner and Fromme \cite{aigner+fromme} proof of Theorem \ref{thm:planar}. In our proof, we need 3 cops to patrol a shortest path of a geometric graph, instead of the single cop required to control a shortest path of a planar graph. 
The idea of the proof of Aigner and Fromme is divide the pursuit into stages. In stage $i$, we assign to $R$ a certain subgraph $H_i$, the \emph{robber territory}, which contains all vertices which $R$ may still safely enter, and to show that, after a finite number of cop-moves, $H_i$ is reduced to $H_{i+1} \subsetneq H_i$. Eventually, there is no safe vertex left for the robber. In each iteration, at most two shortest paths in $H_i$ must be controlled. For a planar graph, this requires one cop per path, and the third cop moves to control another shortest path in $H_i$. For geometric graphs, Lemma \ref{lemma:patrol} shows that 3 cops can patrol any shortest path of a geometric graph. Using that lemma  in place of  Lemma \ref{lemma:control}, the proof of Aigner and Fromme for planar graphs with 3 cops  becomes a proof for geometric graphs with 9 cops. See \cite{aigner+fromme} for the proof  details.
\epf

It is an open question whether this upper bound on the cop number can be improved for the class of geometric graphs. Here we construct a geometric graph that requires 3 cops, which leaves a considerable gap to our upper bound.
Aigner and Fromme \cite{aigner+fromme} proved that any graph with minimum degree $\delta(G) \geq 3$ and girth $g(G) \geq 5$ has $c(G) \geq \delta(G)$.
We describe a geometric graph $G$ on 1440 vertices with unit connectivity radius which has girth 5 and minimum degree 3, so that $c(G) \geq 3$.  A representative subgraph of $G$ appears in Figure \ref{fig:annular}.  Start with an annulus having inner radius 55 and outer radius 57. Within the annulus, we create an inner and outer strip of pentagons. Each pentagon corresponds to a one degree angle (or $\pi/180$ radians), so that there are a total of 720 pentagons. 
We give the vertex locations in polar coordinates $(r:\theta)$ where $\theta$ is in degrees.
For integral $\theta$, $1 \leq \theta \leq 360$, place a vertex at $(55:\theta)$ and at $(57:\theta + 1/2)$.  The interior points (separated by $1/2$ degree) are chosen in a clockwise repeating pattern  $(55: 2\theta)$, $(56.35: 2\theta + 0.5)$, $(55.85: 2\theta +1)$ and $(56: 2\theta + 1.5)$ for integral $\theta$, $1 \leq \theta \leq 180$.  Simple calculations show that a unit connectivity radius gives the geometric graph as shown in Figure \ref{fig:annular}. For example, the law of cosines calculates the lengths of edges on the outer and inner boundaries as approximately $0.995$ and $0.960$, respectively.


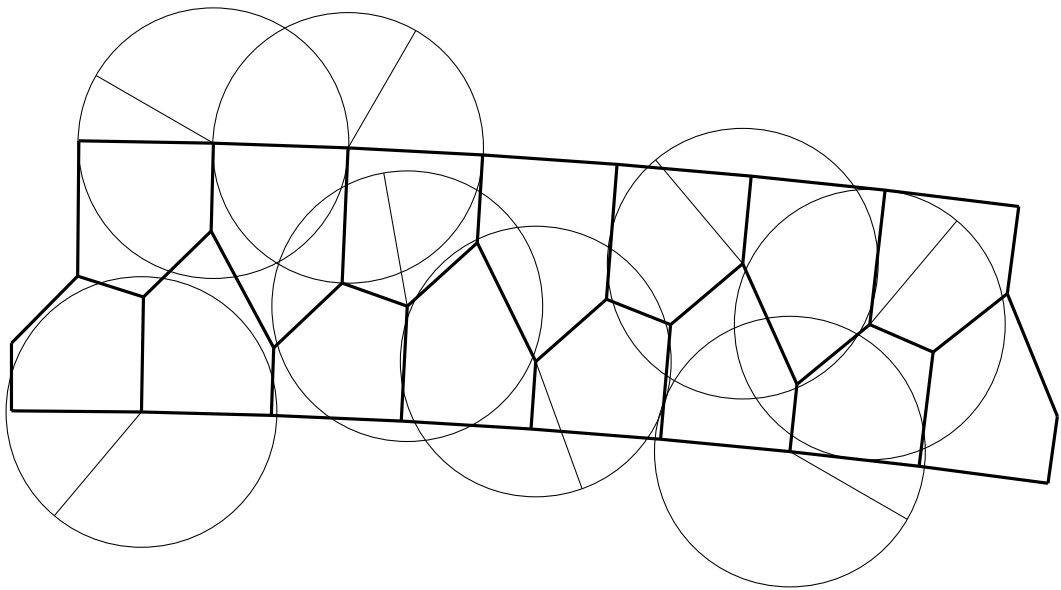
\begin{figure}[ht]
\begin{center}
\begin{tikzpicture}[scale=1.8, rotate=90, yscale=-1]

\tikzstyle{every node}=[font=\small]

\path (0: 55) coordinate (A1);
\path (1: 55) coordinate (A2);
\path (2: 55) coordinate (A3);
\path (3: 55) coordinate (A4);
\path (4: 55) coordinate (A5);
\path (5: 55) coordinate (A6);
\path (6: 55) coordinate (A7);
\path (7: 55) coordinate (A8);
\path (8: 55) coordinate (A9);

\path (.5: 57) coordinate (B1);
\path (1.5: 57) coordinate (B2);
\path (2.5: 57) coordinate (B3);
\path (3.5: 57) coordinate (B4);
\path (4.5: 57) coordinate (B5);
\path (5.5: 57) coordinate (B6);
\path (6.5: 57) coordinate (B7);
\path (7.5: 57) coordinate (B8);

\path (0: 55.5) coordinate (C1);
\path (1: 55.85) coordinate (C2);
\path (2: 55.5) coordinate (C3);
\path (3: 55.85) coordinate (C4);
\path (4: 55.5) coordinate (C5);
\path (5: 55.85) coordinate (C6);
\path (6: 55.5) coordinate (C7);
\path (7: 55.85) coordinate (C8);
\path (8: 55.5) coordinate (C9);

\path (.5: 56) coordinate (D1);
\path (1.5: 56.35) coordinate (D2);
\path (2.5: 56) coordinate (D3);
\path (3.5: 56.35) coordinate (D4);
\path (4.5: 56) coordinate (D5);
\path (5.5: 56.35) coordinate (D6);
\path (6.5: 56) coordinate (D7);
\path (7.5: 56.35) coordinate (D8);
\path (8.5: 56) coordinate (D9);

\draw[very thick] (A1) -- (A2) -- (A3) -- (A4) -- (A5) -- (A6) -- (A7) -- (A8) -- (A9);
\draw[very thick] (B1) -- (B2) -- (B3) -- (B4) -- (B5) -- (B6) -- (B7) -- (B8);

\foreach \i in {1,2,3,4,5,6,7,8}
{
\draw[very thick] (A\i) -- (C\i);
\draw[very thick]  (B\i) -- (D\i);
\draw[very thick]  (C\i) -- (D\i);
}

\draw[very thick]  (A9)--(C9);

\draw[very thick]  (D1) -- (C2);
\draw[very thick]  (D2) -- (C3);
\draw[very thick]  (D3) -- (C4);
\draw[very thick]  (D4) -- (C5);
\draw[very thick]  (D5) -- (C6);
\draw[very thick]  (D6) -- (C7);
\draw[very thick]  (D7) -- (C8);
\draw[very thick]  (D8) -- (C9);

\draw (A2) circle (1);
\draw(A7) circle (1);
\draw(B2) circle (1);
\draw(B3) circle (1);
\draw(C4) circle (1);
\draw(C5) circle (1);
\draw(D6) circle (1);
\draw(D7) circle (1);

\draw (A2) -- ++(220:1);
\draw (A7) -- ++(120:1);
\draw (B2) -- ++(300:1);
\draw (B3) -- ++(30:1);
\draw (C4) -- ++(350:1);
\draw (C5) -- ++(160:1);
\draw (D6) -- ++(320:1);
\draw (D7) -- ++(40:1);

\end{tikzpicture}
\caption{Part of a 3-regular geometric graph $G$ on 1440 vertices with $c(G)=3$. The eight circles show the connectivity neighborhood for each type of vertex.}
\label{fig:annular}
\end{center}
\end{figure}

We must have $c(G)=3$ since $G$ is planar. Indeed, there is a simple  winning strategy for three cops. Have cop $C_1$ remain stationary on any interior vertex. Place  cops $C_2,C_3$ on vertices on the inner and outer boundaries, separated by half a degree.
In each step, one of the boundary cops can take a clockwise step along his boundary while preventing the robber from crossing the shortest path between $C_2,C_3$. Eventually the robber cannot move counterclockwise because of $C_2,C_3$, and cannot move clockwise because of $C_1$.


\section{Adapting a grid strategy for $\RGG$}
\label{sec:four}
\setcounter{figure}{0}

In this section, we prove Theorem \ref{thm:two}. Our winning two cop strategy is similar to a winning strategy on the grid $P_n \Box P_m$. One cop catches the robber's ``shadow'' in a copy of $P_n$, while the other catches the robber's shadow in a copy of $P_m$. On subsequent moves, either the robber moves towards the boundary, or at least one cop decreases his distance from the robber. Eventually, the robber hits the boundary, and the cops close in for the win. Our cop strategy below follows along similar lines, but accommodates the full range of robber movement.

It is convenient to split the proof of Theorem~\ref{thm:two} into two parts, a probabilistic part and 
a deterministic part. Let $V= \{ x_1, \ldots , x_n \} \subset [0,1]^2$ and let $r \geq s > 0$. 
Let us say that the  tuple $(x_1,\dots,x_n;r,s)$ satisfies condition \conM~when  the following holds:

\begin{itemize}
 \item[\conM] For every $x \in [0,1]^2$ and every $y \in B(x,r) \cap [0,1]^2$, we have 
$V \cap B(x,r) \cap B(y,s) \neq \emptyset$.
\end{itemize}
 
\noindent
All the probability theory needed in the proof of Theorem~\ref{thm:two} is contained in the following lemma.

\begin{lemma}\label{lem:twoproba}
Let us set $s := 5 \sqrt{ \log n / n}$.
Let $x_1,\dots,x_n \in [0,1]^2$ be chosen i.i.d.~uniformly at random, and let $r\geq s$
be arbitrary. 
Then $(x_1,\dots,x_n;r,s)$ satisfies condition \conM~whp.
\end{lemma}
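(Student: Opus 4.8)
The plan is to prove Lemma~\ref{lem:twoproba} by a standard first-moment (union bound) argument over a fine grid of representative points, combined with a geometric observation that reduces the uncountably many pairs $(x,y)$ to a manageable finite check. The key point is that condition \conM\ asks, for every pair of points $x \in [0,1]^2$ and $y \in B(x,r)$, that the lens-shaped region $B(x,r) \cap B(y,s) \cap [0,1]^2$ contains at least one of the random points $x_1,\dots,x_n$. Since $y$ is within distance $r$ of $x$, the intersection $B(x,r) \cap B(y,s)$ always contains a fixed fraction of the disk $B(y,s)$; more precisely, I expect that $B(x,r) \cap B(y,s)$ contains a disk of radius $\Theta(s)$ centered somewhere near $y$. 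The reason is that the boundary arc of $B(x,r)$ passing near $y$ has curvature radius $r \geq s$, so locally it looks like a half-plane through a point at distance at most $s$ from $y$; thus a ball of radius comparable to $s$ fits inside the lens. Intersecting with $[0,1]^2$ costs at most a constant factor (a quarter of the area near a corner), so the lens always contains a disk of area $\geq c s^2$ for an absolute constant $c > 0$.

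First I would make this geometric claim precise: show there is an absolute constant $c_0 > 0$ such that for every $x \in [0,1]^2$ and every $y \in B(x,r) \cap [0,1]^2$, the region $B(x,r) \cap B(y,s) \cap [0,1]^2$ contains a ball $B(z, c_0 s)$ for some $z \in [0,1]^2$. Given this, the probability that a single one of the $n$ i.i.d.\ uniform points lands in $B(z,c_0 s)$ is at least $\pi c_0^2 s^2 / 4$ (the division by $4$ accounts for the worst case where $z$ is a corner of the square and only a quarter of the ball lies inside $[0,1]^2$). With $s = 5\sqrt{\log n / n}$ this probability is of order $\log n / n$, so the probability that \emph{none} of the $n$ points lands in the ball is at most $(1 - \Theta(\log n/n))^n \leq n^{-\Omega(1)}$, and by choosing the constant $5$ large enough we can push this exponent below $-2$ or so.

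The final step is the discretization needed to apply a union bound. Condition \conM\ quantifies over all $x$ and $y$, which is an infinite family, so I would lay down a grid $\Ncal$ of points in $[0,1]^2$ at spacing $\delta = s / 100$ (or any small multiple of $s$), giving $\abs{\Ncal} = O(1/s^2) = O(n / \log n)$ points. I would then argue that it suffices to verify a discretized version of \conM\ at grid points: if every lens centered at grid-representatives of $x$ and $y$ contains a witness point, then by continuity and the slack built into the constant $c_0$, the original condition holds for all real $x,y$. Concretely, replacing $x,y$ by their nearest grid points perturbs the lens by at most $\delta \ll s$, which the constant $c_0$ absorbs. The union bound then runs over $O(\abs{\Ncal}^2) = O((n/\log n)^2) = \mathrm{poly}(n)$ pairs, each failing with probability $\leq n^{-\Omega(1)}$, so the total failure probability is $\mathrm{poly}(n) \cdot n^{-\Omega(1)} = o(1)$ provided the constant $5$ in the definition of $s$ is chosen large enough to beat the polynomial factor.

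The main obstacle I anticipate is the geometric claim in the first step: verifying rigorously that the lens $B(x,r) \cap B(y,s)$ always contains a ball of radius $\Theta(s)$, uniformly over $r \geq s$ and over the relative position of $x$ and $y$. The worst case is when $y$ lies exactly on the boundary of $B(x,r)$, so that the lens is ``cut in half'' by the arc of $B(x,r)$; here one must use $r \geq s$ to bound the curvature of that arc and show a ball of radius $c_0 s$ still fits on the inner side. The corner effects from intersecting with $[0,1]^2$ are a secondary nuisance but only cost a constant factor in area, which I would handle by the crude $1/4$ bound. Everything after the geometric claim is routine concentration and a union bound, so the whole proof hinges on pinning down $c_0$.
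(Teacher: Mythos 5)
Your overall plan (a geometric lower bound on the lens area, then a union bound over a discretized family of pairs) is viable and genuinely different from the paper's proof, but as written it contains one concrete gap: your fallback of ``choosing the constant $5$ large enough to beat the polynomial factor'' is not available, because the lemma \emph{fixes} $s = 5\sqrt{\log n/n}$. This matters quantitatively. Your union bound runs over $\Theta\bigl((n/\log n)^2\bigr)$ pairs of grid points, so each pair must fail with probability $o(n^{-2})$; since a pair fails with probability at most $\exp\bigl(-n\cdot\frac{\pi}{4}(c_0 s)^2\bigr) = n^{-25\pi c_0^2/4}$, you need $25\pi c_0^2/4 > 2$, i.e.\ $c_0 > 0.32$ or so. A crude constant such as $c_0 = 1/4$ gives exponent $25\pi/64 \approx 1.23$, which loses to the $n^{2-o(1)}$ pairs. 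So your proof hinges on a sharp version of the geometric claim, which you flagged as the main obstacle but did not establish. Fortunately it holds with $c_0 = 1/2$, and no curvature argument is needed: if $d = \norm{x-y} \geq s/2$, let $z$ be the point of $[y,x]$ at distance $s/2$ from $y$; then $\norm{z-x} = d - s/2 \leq r - s/2$ gives $B(z,s/2) \subseteq B(x,r)$, and trivially $B(z,s/2) \subseteq B(y,s)$. If $d < s/2$, then $B(y,s/2) \subseteq B(x, d+s/2) \subseteq B(x,r)$ since $d + s/2 < s \leq r$. In both cases the lens contains a ball of radius $s/2$ centered at a point of $[0,1]^2$ (the center lies on $[y,x]$, so your quarter-ball bound applies), giving per-pair failure probability at most $n^{-25\pi/16} = n^{-4.9\dots}$, which comfortably survives the union bound and the small loss from your $\delta = s/100$ perturbation. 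With that, your argument closes.

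For comparison, the paper avoids pairs altogether: it tiles $[0,1]^2$ into squares of side $t = (1+o(1))\sqrt{2\log n/n}$ and shows by a first-moment bound over the $O(n/\log n)$ cells that whp every cell is occupied; the rest is deterministic. If $\norm{x-y} < r - t\sqrt{2}$, the cell containing $y$ lies inside $B(x,r)$ and its occupant is within $t\sqrt{2} < s$ of $y$; otherwise one slides $z$ along $[x,y]$ to distance $r - t\sqrt{2}$ from $x$, and the occupant of the cell containing $z$ is within $2t\sqrt{2} \leq s$ of $y$. This is the same ``move toward $x$'' trick as above, but packaging the randomness as ``every cell occupied'' replaces your union bound over $n^{2-o(1)}$ pairs (and the attendant perturbation/continuity step) by one over $n^{1-o(1)}$ cells, which makes the constant bookkeeping ($4 < 5$) transparent and the whole proof shorter.
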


\begin{proof}
Let us set $t := 1 / \left\lceil\sqrt{ n / 2\log n}\right\rceil$.
Then $t = (1+o(1)) \sqrt{2 \log n / n}$ and it is of the form $t = 1/k$ with $k\in\eN$ an integer.
We can thus tile $[0,1]^2$ into $1/t^2$ squares of dimension $t\times t$.
Let $Z$ denote the number of these squares that do not contain any point of
$x_1,\dots,x_n$.
Then
\[ 
 \Ee [Z] 
= (1/t^2) \cdot (1 - t^2)^n 
\leq (1/t^2) e^{-n t^2}
= (1+o(1)) \frac{n}{2\log n} e^{-(1+o(1))2\log n}
= o(1).
\]
Thus, whp each square contains at least one $x_i$.

Now let us assume that each square of our dissection indeed contains
a point of $x_1,\dots,x_n$ and pick an arbitrary $x\in[0,1]^2$ and $y\in B(x;r)\cap [0,1]^2$.
If $\norm{x-y} < r-t\sqrt{2}$ then the square of our dissection that contains $y$ 
is completely contained in $B(x;r)$ (because the diameter of a $t\times t$ square is $t\sqrt{2}$). 
Hence any point $x_i$ that lies inside this square will clearly do as $\norm{y-x_i} \leq t\sqrt{2} < s$.
Let us thus assume $r-t\sqrt{2} \leq \norm{x-y} \leq r$, and let $z\in[x,y]$ 
be chosen on the segment between $x$ and $y$ in such a way that 
$\norm{z-x}= r-t\sqrt{2}$.
Then the square of our dissection that contains $z$ is contained
in $B(x;r)$ and the point $x_i$ inside this square satisfies
$\norm{y-x_i} \leq \norm{y-z}+\norm{z-x_i} \leq 2t\sqrt{2} \leq s$.
\end{proof}

\begin{lemma}\label{lem:twodet}
Suppose that $(x_1,\dots,x_n;r,s)$ with $x_1,\dots,x_n \in [0,1]^2$ and $0 < s < r^2/10^{10}$ satisfy condition \conM.
Then $c( G(x_1,\dots,x_n;r) ) \leq 2$.
\end{lemma}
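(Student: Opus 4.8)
The plan is to adapt the textbook two-cop strategy for the grid $P_n \Box P_m$, where one cop captures and then maintains the robber's $x$-coordinate while the other does the same for the $y$-coordinate, and a potential function forces the robber toward a corner. Throughout I will use condition \conM\ in the following form: if a cop sits at a vertex $u$ and $T\in[0,1]^2$ satisfies $\norm{u-T}\le r$, then taking $x=u,\ y=T$ in \conM\ yields a vertex $w$ with $\norm{u-w}\le r$ and $\norm{w-T}\le s$. Hence in a single move a cop can relocate to within $s$ of \emph{any} target point within distance $r$ of its current position. Since the robber moves along edges it too travels at most $r$ per turn, so the game behaves like a turn-based pursuit on $[0,1]^2$ with step length $r$ and a positioning tolerance of $s$ for the cops.

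First I would run a short setup phase. Writing a point as $(\cdot_x,\cdot_y)$, cop $C_1$ drives its $x$-coordinate to within $O(s)$ of $R_x$ and places itself above the robber in the $y$-direction, while $C_2$ drives its $y$-coordinate to within $O(s)$ of $R_y$ and places itself to the right of the robber in the $x$-direction, the sides being chosen once the robber has committed so that it is aimed at a fixed corner, say $(0,0)$. Establishing each match is a one-dimensional pursuit on $[0,1]$: the cop closes the relevant coordinate gap by up to $r$ each turn, and although the robber may flee at full speed, it can only do so until it reaches the boundary, after which the gap collapses to $O(s)$. This is exactly the ``catch the shadow in a copy of $P_n$ (resp.\ $P_m$)'' step.

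The main phase maintains the invariant that $C_1$ tracks $R_x$ to within $O(s)$ from above and $C_2$ tracks $R_y$ to within $O(s)$ from the right. Setting $a=C_{1,y}-R_y\ge 0$ and $b=C_{2,x}-R_x\ge 0$, I would consider the potential
\[
 \Phi \;=\; 2(a+b) + R_x + R_y .
\]
In each round the robber moves by some $(\delta_x,\delta_y)$ with $\delta_x^2+\delta_y^2\le r^2$; then $C_1$ re-matches $R_x$, spending $|\delta_x|$ of its budget and using the remaining $\sqrt{r^2-\delta_x^2}$ to lower $a$, while $C_2$ symmetrically re-matches $R_y$ and lowers $b$ using $\sqrt{r^2-\delta_y^2}$. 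From $\delta_x^2+\delta_y^2\le r^2$ one checks that $a+b$ never increases, and a short case analysis on the direction of the robber's step shows $\Phi$ drops by $\Omega(r)$ each round: moving toward a cop lets that cop reduce $a$ or $b$, fleeing a cop strictly decreases the matching coordinate $R_x$ or $R_y$, and a full-speed diagonal flight decreases $R_x+R_y$ outright. As $0\le\Phi=O(1)$ initially, the robber is caught after $O(1/r)$ rounds, the criterion $R^i\in\cN(C_k^{i-1})$ being met once some cop lies within $r$ of the robber.

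The hard part is the bookkeeping of the $O(s)$ positioning tolerance that \conM\ introduces. One must check that re-matching a coordinate to within tolerance while simultaneously making progress in the other is feasible: the natural target can sit slightly beyond distance $r$ once the cop's own $O(s)$ error is included, so the usable budget is $\sqrt{r^2-\delta_x^2}$ diminished by a tolerance term, and after this diminishment the guaranteed per-round decrease of $\Phi$ must still be positive and bounded below. Tracking the robber's coordinate, preserving the ``above/right'' invariants, and effecting the final capture each pit a quantity of order $s$ against a margin governed by $r$; when these comparisons are pushed to the regime in which the robber plays to minimize the cops' progress, the relevant margin is of order $r^2$ rather than $r$, and this is precisely what the hypothesis $s<r^2/10^{10}$ secures, with a large constant to spare. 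I expect this tolerance accounting, rather than the structure of the potential, to be the principal obstacle, and it is what pins down the exponent $\tfrac14$ in Theorem~\ref{thm:two}.
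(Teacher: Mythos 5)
Your high-level plan is the same as the paper's (use \conM\ so that each cop can realize any continuous move of length at most $r$ up to an error $s$, have one cop capture the robber's vertical coordinate and the other the horizontal one, then run a potential argument over $O(1/r)$ rounds), and your setup phase is fine. But the main phase has a genuine gap, and it sits exactly where you wave at "bookkeeping": the endgame. Your strategy drives the gaps $a=C_{1,y}-R_y$ and $b=C_{2,x}-R_x$ to $0$, i.e.\ each cop aims at the robber's own position. Once $a,b\approx 0$ every capture inequality has \emph{zero} slack, and the $s$-tolerance then breaks the argument outright rather than perturbing it. Concretely, suppose the cops sit at $R^t+u_1$ and $R^t+u_2$ with $u_1,u_2\neq 0$, $\norm{u_i}$ of order $s$ (the cops cannot force $u_i=0$: \conM\ only guarantees a vertex within $s$ of a target, and the point set is adversarial in this lemma). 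The robber picks a unit vector $d$ with $d\cdot u_1\leq 0$ and $d\cdot u_2\leq 0$ --- the intersection of two closed half-planes through the origin is never empty --- and moves to $R^{t+1}=R^t+rd$. Then $\norm{R^{t+1}-(R^t+u_i)}=\bigl(r^2-2r\,d\cdot u_i+\norm{u_i}^2\bigr)^{1/2}\geq\bigl(r^2+\norm{u_i}^2\bigr)^{1/2}>r$, so neither cop captures. Moreover, if the errors $u_1,u_2$ happen to point down-left, then $d$ points up-right: the cops can do no better than mirror the move (their whole budget is consumed re-matching the coordinates), so $a+b$ stays at $\approx 0$ while $R_x+R_y$ grows by up to $\sqrt{2}\,r$, and your potential $\Phi=2(a+b)+R_x+R_y$ \emph{increases} by $\Omega(r)$. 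Thus the case analysis "moving toward a cop lets that cop reduce $a$ or $b$" fails precisely when there is nothing left to reduce, the claimed monotone decrease is false, and no constant in the hypothesis $s<r^2/10^{10}$ repairs this, because the deficit in the capture inequality is adversarial and persistent, not cumulative noise you can bound.

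The missing ingredient --- a strategic idea, not bookkeeping --- is the paper's \emph{offset}: cop $C_1$ never aims at $R$ but at the point $P_1$ a fixed distance $r/3$ \emph{below} $R$ on the vertical line $L_1$ (and $C_2$ aims $r/3$ to the left on $L_2$). With that buffer, every robber move of length at most $r/2$, and every long move into the half-plane toward $C_1$, ends within distance at most $\sqrt{7}\,r/3<r$ of $C_1$ (see Figure~\ref{fig:S2onedown}), i.e.\ is fatal with slack $\Omega(r)$; it is this $\Omega(r)$ slack that absorbs the accumulated placement error $O(sT)=O(s/r)<r/10^{7}$, and this is where $s<r^2/10^{10}$ is genuinely used. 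The robber is then eventually restricted to long up-and-right moves (the paper's type T4), each increasing $R_x+R_y$ by at least $\frac{\sqrt{3}-1}{4}r$, while the number of all other moves is at most $O(1/r)$ because each such move lets a cop gain at least $r/4$ toward its offset target; since $R_x+R_y\leq 2$, the robber must be caught within $O(1/r)$ rounds. So the correct potential is $R_x+R_y$ alone, driven by forbidden move types, rather than your gap-reduction functional; with your orientation (cops above and to the right) the same argument runs with the coordinate sum forced downward.
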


\begin{proof}
We can assume without loss of generality
that $r \leq \sqrt{2}$ because otherwise $G$ is a clique and a single cop will be able to catch
the robber in a single move.
We start by describing the strategy of the cops.
The two cops act independently (i.e.~the action of $C_1$ does not
depend on the position or movement of $C_2$ and vice versa). First,  we
describe only the movements of $C_1$. Cop $C_2$ will follow a similar strategy, described below.

We introduce notation for a series of lines and points. Suppose the robber is at point $R^t$. Let $L_1^t$  be the vertical line through $R^t$. Let $P_1^t$ denote
the point on $L_1$ exactly $r/3$ below $R^t$ provided this point is above the $x$-axis.
Otherwise $P_1^t$ is the point on the $x$-axis exactly below $R_1^t$.
Similarly, we define the horizontal line $L_2^t$ and the point $P_2^t$ to the left of $R^t$ on $L_2$. For simplicity, we occasionally refer to $L_1, L_2, P_1, P_2$ (without the superscript) to refer to these lines and points with respect to the current position of $R$.

At time $t=0$, $C_1$ starts at a vertex $C_1^0 := x_j$ that is within $s$ of the origin $(0,0)^t$;  such an $x_j$ 
exists because of \conM.
In each round, the robber will first choose his new location $R^{t+1}$.
The cop then chooses a point $y \in B(C_1^t, r) \cap [0,1]^2$ and finds an 
$x_i \in B(C_1^t,r)\cap B(y,s)$ (such an $x_i$ exists because of property \conM)
and chooses as his new location $C_1^{t+1} := x_i$.
The strategy of $C_1$ has three phases:

\begin{my_itemize}
\item[S1:] Cop $C_1$ moves right
until he reaches a point within $s$ of $L_1$ and within $r/10^9$ of the $x$-axis.
\item[S2:] While staying within $r/10^{7}$ of $L_1$,
cop $C_1$ moves to within $s$ of the point $P_1$.
\item[S3:] Cop $C_1$ tries to stay as close to $P_1$ as he can.
\end{my_itemize}

{\bf Stage S1:} 
During stage S1, cop $C_1$ moves as follows.  
Let $y$ be the point of $B(C_1^t,r)$ closest to $L_1^{t+1}$. Then $C_1$ moves to a point $x_i \in B(C^t_1,r) \cap B(y,s)$.
If $y\in L_1$ then stage S1 ends. 
Otherwise, the cop travels a horizontal distance of at least $r-s$.
Thus, stage S1 lasts no more than $\lceil 1/(r-s)\rceil < 10/r$ rounds, since 
he can keep jumping right by at least $r-s$ and he will reach $L_1$ before he reaches the right boundary
of the unit square (note the cop either starts to the left of $L_1$ or within $s$ of $L_1$). 
Observe that, by the end of stage S1, the $y$-coordinate of $C_1$ is at most $s \cdot 10 / r < r/10^9$
(as $s < r^2 / 10^{10}$).

{\bf Stage S2: } In this stage, the cop will always stay as close to $L_1$ as he can, and will move
closer to his target point $P_1$ if he can.
The round starts with $C_1^t$ within $s$ of $L_1^t$ and within $r/10^9$ of the $x$-axis.
If $R^t$ has $y$-coordinate smaller than $r/3$ then we are immediately done with stage S2.
We can thus assume that $R^t$ is above $C_1^t$.

If $P_1^{t+1} \in B(C_1^t,r)$ then we can pick an $x_i \in B(C_1^t,r) \cap B(P_1^{t+1},s)$ and
set $C_1^{t+1} := x_i$, thereby ending stage S2.
Otherwise, the cop's move depends on how the robber moves.
We classify the possible robber moves into four (non-exclusive) types, depending on where the robber jumps, as shown in Figure \ref{fig:S2}. 
Writing this displacement in polar coordinates $(d:\theta)$, the four types  are

\begin{my_itemize}
\item[T1:] $d \leq r/2$. 
\item[T2:] $r/2 < d \leq r$ and $7\pi/6 \leq \theta \leq 11\pi/6$. 
\item[T3:] $r/2 < d \leq r$ and $2\pi/3 \leq \theta \leq 4\pi/3$. 
\item[T4:] $r/2 < d \leq r$ and $-\pi/6 \leq \theta \leq 2\pi/3$. 
\end{my_itemize}

\begin{figure}[h]
\begin{center}

\begin{tabular}{cccc}
\begin{tikzpicture}

\draw[thick,fill=gray!50] (0,0) circle (.5);

\draw (-1.25,0) -- (1.25,0);
\draw (0,-1.25) -- (0,1.25);

\draw[fill] (0,0) circle (.05);

\end{tikzpicture}
&

\begin{tikzpicture}
\draw[thick,fill=gray!50] (210:0.5) -- (210:1)
arc (210:330:1) -- (330:0.5)
arc (330:210:0.5) -- cycle;

\draw (-1.25,0) -- (1.25,0);
\draw (0,-1.25) -- (0,1.25);

\draw[fill] (0,0) circle (.05);

\end{tikzpicture}
&

\begin{tikzpicture}

\draw[thick,fill=gray!50] (120:0.5) -- (120:1)
arc (120:240:1) -- (240:0.5)
arc (240:120:0.5) -- cycle;

\draw (-1.25,0) -- (1.25,0);
\draw (0,-1.25) -- (0,1.25);

\draw[fill] (0,0) circle (.05);

\end{tikzpicture}
&

\begin{tikzpicture}

\draw[thick,fill=gray!50] (-30:0.5) -- (-30:1)
arc (-30:120:1) -- (120:0.5)
arc (120:-30:0.5) -- cycle;

\draw (-1.25,0) -- (1.25,0);
\draw (0,-1.25) -- (0,1.25);

\draw[fill] (0,0) circle (.05);

\end{tikzpicture}

\\
T1 & T2 & T3 & T4 
\end{tabular}
\end{center}
\caption{The robber move types.
In each case the robber will jump into the gray area.\label{fig:S2}}
\end{figure}
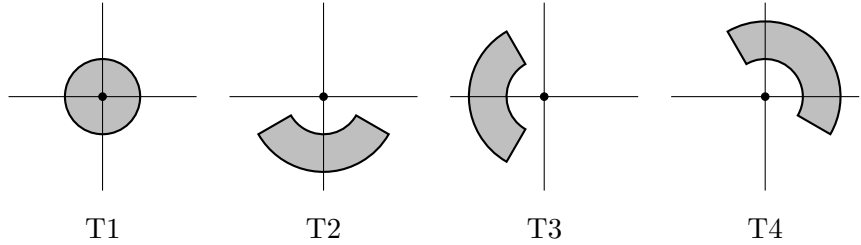

\noindent
If $R$ does a T1 move, then we compute $J^{t+1} := R^{t+1}-R^t$.
We can write $J^{t+1} = ( \ell \cos\alpha, \ell \sin\alpha)$ with $\ell  \leq r/2$.
Assuming $C_1^t$ is within $r/10^7$ of $L_1^t$, we can move at most
$r(\cos(\alpha)/2 + 1/10^{7})$ to the left or right to reach $L_1^{t+1}$.
Thus $y := \left(R_x^{t+1}, C_1^t + r \left(1-\frac12\cos\alpha-1/10^{7}\right) \right)^T \in L_1^{t+1}\cap B(C_1^t,r)$, where $R_x^{t+1}$ is the $x$-coordinate of $R^{t+1}$.
We pick $x_i \in B(C_1^{t+1},r)\cap B(y,s)$ and set $C_1^{t+1} := x_i$.
Observe $x_i$ is within $s$ of $L_1^{t+1}$ and that 
the distance between $C_1$ and $R$ has decreased by at least
$r\left(1-\frac12\sin\alpha-\frac12\cos\alpha-1/10^7\right) - s \geq 
r\left(1 - \frac12\sqrt{2}-1/10^7-1/10^{10}\right) > r / 4$.

If $R$ does a T2 move, then $L_1$ moves left or right by at most $r \cos(\pi/6) = \sqrt{3}r/2$ and
$R$ moves down by at least $r\sin(\pi/6) = r/2$.
Assuming that $C_1^t$ is within $r/10^7$ of $L_1^t$, we can thus move sideways by at most 
$(\sqrt{3}/2+1/10^7)r$ and reach $L_1^{t+1}$. We can therefore pick a point 
$y \in L_1^{t+1} \cap B(C_1^t,r)$ that is at least
$(\frac32-\sqrt{3}/2-1/10^7) r - s > r/2$ closer to $R^{t+1}$ than $C_1^t$ is to $R^t$.
Again we pick $x_i \in B(C_1^{t+1},r)\cap B(y,s)$ and set $C_1^{t+1} := x_i$.

If $R$ does a T3 or T4 move then we compute $y := R^{t+1}-R^t + C_1^t$,
(if $y \not\in [0,1]^2$ then we take the point $y'$ on $\partial[0,1]^2$ with minimum
distance to $y$)
we pick $x_i \in B(C_1^t,r)\cap B(y,s)$ and we set $C_1^{t+1} := x_i$.
Note that this way the distance of $C_1$ to $P_1$ cannot increase by more than
$s$.

{\bf Stage S3: } At present it is not yet clear whether stage S2 will ever finish (and also
we may not be able to stay within $r/10^7$ of $L_1$ indefinitely).
If however we do get to stage S3, 
we observe that  $R$ cannot make
a T1 or T2 move without getting caught by the cop immediately
(see Figure~\ref{fig:S2onedown}).
Therefore, during stage S3,  we act exactly as in the case of stage S2 where
$R$ does a T3 or T4 move.
This concludes the description of the first cop's movements.

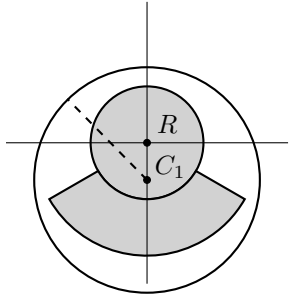
\begin{figure}[h]
 \begin{center}

 \begin{tikzpicture}[scale=1.5]

\draw[thick] (0,-.33) circle (1);
\draw[thick,fill=gray!35] (0,0) circle (.5);
\draw[thick,fill=gray!35] (210:0.5) -- (210:1)
arc (210:330:1) -- (330:0.5)
arc (330:210:0.5) -- cycle;

\draw (-1.25,0) -- (1.25,0);
\draw (0,-1.25) -- (0,1.25);

\draw[dashed,thick] (0,-.33) -- (-.707,.39);

\node[above right] (R) at (0,0) {\small{$R$}};
\node[below right] (C) at (-.02,-.03) {\small{$C_1$}};

\draw[fill]  (0,0) circle (.03);
\draw[fill]  (0,-.33) circle (.03);

\end{tikzpicture}

 \end{center}
\caption{If $C_1$ is within $r/10^7$ of the point $r/3$ below $R$, then 
$R$ can no longer make T1 or T2 moves.\label{fig:S2onedown}}
\end{figure}

Suppose that during the first $T = 1000 / r$ moves of the game the robber does not get caught.
Stage S1 will have finished after at most $10/r$ moves.
Since $s \cdot T < r/10^7$, we will be able to stay within $r/10^7$ of $L_1$ for
the remaining moves until $T$, and assuming we reach stage S3 at some time $t < T$ we will be able to stay within $r/10^7$
of $P_1$ for the remaining moves until $T$.
Thus stage S2 will have finished as soon as we have done at most $14/r$ moves of type T1 or T2
(the first $10/r$ may occur during stage S1 and after that we move closer to $P_1$ by at least
$r/4$ in each T1 or T2 move).  Thus, out of the first $T$ moves, at most $14/r$ robber moves are of type T1 or T2.

Completely analogously we can define a strategy for the second cop $C_2$ that will ensure that
in the first $T$ moves no more than $14/r$ robber moves are of type T1 or T3.
Cop $C_2$ tries to  attain position on the horizontal line $L_2$ through $R$. The stages of his strategy are:
\begin{my_itemize}
\item[S$'$1:] Cop $C_2$ moves up
until he reaches a point within $s$ of $L_2$ and within $r/10^9$ of the $y$-axis.
\item[S$'$2:] While staying within $r/10^{7}$ of $L_2$,
cop $C_2$ moves to within $s$ of the point $P_2$.
\item[S$'$3:] Cop $C_2$ tries to stay as close to $P_2$ as he can.
\end{my_itemize}

Observe that whenever $R$ does a T4 move, then the sum of his coordinates
increases by at least 
\[  
\min_{-\pi/6 \leq \theta \leq \frac{2\pi}{3}} (\sin\theta+\cos\theta) \frac{r}{2} 
 =  \left(\frac{\sqrt{3}-1}{4}\right) r.
\]
Meanwhile, if the robber makes a $T1$, $T2$ or $T3$ move, the sum of his coordinates decreases by at most $r \sqrt{2}$ 
(achieved at $\theta=5 \pi /4$).
Hence, if the robber did not get caught in the first $T$ moves, then the sum of robbers coordinates at time $T$ is at least
\[ 
\begin{array}{rcl}
 R_x^T + R_y^T 
& \geq & 
(T-28/r) \cdot \left(\frac{\sqrt{3}-1}{4}\right) r - (28/r) \cdot r\sqrt{2}  \\
& = & 
972 \left(\frac{\sqrt{3}-1}{4}\right)  - 28 \sqrt{2} 
 \,  > \,  
2.
\end{array} 
\]
But this is impossible, since the robber stays inside the unit square.
It follows that $R$ gets caught by the cops within the first $T$ moves.
\end{proof}

\begin{proofof}{Theorem \ref{thm:two}} Follows from Lemmas~\ref{lem:twoproba} and~\ref{lem:twodet} by taking $K_1=3 \cdot 10^5$.
\end{proofof}


\section{A dismantlable $\RGG$}
\setcounter{figure}{0}

In this section, we prove Theorem \ref{thm:one} by showing that when $r \geq K_2 ( \log n /n)^{1/5}$ the  random geometric graph is dismantlable.
We begin by setting some notation. Let $c := (\frac12,\frac12)$ denote the center of the unit square $[0,1]^2$.
Let us write 
\[ 
\nb(i) := \{ 1 \leq j \leq n : \norm{x_i-x_j}\leq r, \text{ and } \norm{x_j-c} < \norm{x_i-c} \}.
\]
In other words, $\nb(i)$ is the set of (indices) of vertices 
adjacent to $x_i$ and closer to the center $c$ than $x_i$.
We will prove the following lemma.

\begin{lemma}
\label{lemma:pitfall}
There is a constant $K_2>0$ such that the following holds.
Suppose $r \geq K_2 \left(\log n / n\right)^{1/5}$.
Whp~the following holds for all $1\leq i \leq n$:
either $\norm{x_i-c} < r/2$, or
there is a $j \in \nb(i)$ such that $\nb(i)\subseteq\nb(j)$.
\end{lemma}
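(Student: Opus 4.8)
The plan is to prove the lemma by exhibiting, for each vertex $x_i$ with $D := \norm{x_i-c}\geq r/2$, a single vertex $x_j$ that dominates $x_i$ once all vertices farther from $c$ have been deleted; this is exactly the pitfall condition driving the farthest‑from‑center dismantling order behind Theorem~\ref{thm:one}. Since every index of $\nb(i)$ is a point of the lens $\Lcal_i := B(x_i,r)\cap B(c,D)$, it suffices to produce a vertex $x_j$ with $\norm{x_j-c}<D$ and $\{x_i\}\cup\nb(i)\subseteq \cN(x_j)$; concretely I will find $x_j$ with $\Lcal_i\cup\{x_i\}\subseteq B(x_j,r)$, so that $x_j$ is adjacent to $x_i$ and to every inward neighbour of $x_i$, and is itself closer to $c$ (hence deleted later in the ordering).

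The geometric heart is the following claim. Put $p^\ast := x_i-\frac{r^2}{2D^2}(x_i-c)$, the point reached by moving from $x_i$ a distance $r^2/(2D)$ straight toward $c$. Working in coordinates with $c$ at the origin and $x_i=(D,0)$, one has $p^\ast=(D-\tfrac{r^2}{2D},0)$, while the two ``tips'' of $\Lcal_i$ (the intersection points of $\partial B(x_i,r)$ and $\partial B(c,D)$) are $(D-\tfrac{r^2}{2D},\pm r\sqrt{1-r^2/4D^2})$. A short computation shows that along $\partial\Lcal_i$ the distance to $p^\ast$ is maximised exactly at these tips, where it equals $r\sqrt{1-r^2/4D^2}<r$; so $\Lcal_i\subseteq B(p^\ast,r)$ with a radial margin of order $r^3/D^2$. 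Perturbing the centre, I let $S_i$ be the set of $y=p^\ast+(w_1,w_2)$ cut out by the two binding tip constraints $w_1^2+(w_2\mp b)^2\leq r^2$, with $b=r\sqrt{1-r^2/4D^2}$. Every $y\in S_i$ satisfies $\Lcal_i\subseteq B(y,r)$, and the remaining requirements $\norm{y-x_i}<r$ and $\norm{y-c}<D$ hold with room to spare on the center‑ward half of $S_i$. Restricting to that half (which also keeps the region inside $[0,1]^2$ when $x_i$ is near $\partial[0,1]^2$, as it lies strictly closer to $c$ than $x_i$) yields $S_i'\subseteq[0,1]^2$ with $\area(S_i')\asymp r^5/D^3$, hence $\area(S_i')\geq c\,r^5$ uniformly over $r/2\leq D\leq \sqrt2/2$ (the bound is smallest, of order $r^5$, when $D\asymp 1$).

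The probabilistic step is then a one‑line union bound requiring no net over the square, since only the $n$ actual vertices must be dominated. Fixing $i$ and conditioning on the position of $x_i$, the other $n-1$ points are i.i.d.\ uniform, so $\Pee[\,S_i'\cap(V\setminus\{x_i\})=\emptyset\mid x_i\,]\leq (1-\area(S_i'))^{\,n-1}\leq \exp(-c r^5(n-1))$. Summing over $i$ gives a failure probability of at most $n\exp(-c r^5(n-1))$, which is $o(1)$ once $r^5\geq K_2^5\log n/n$ for a large enough absolute constant $K_2$ — exactly the hypothesis $r\geq K_2(\log n/n)^{1/5}$. Any $x_j\in S_i'$ then satisfies $\norm{x_j-c}<D$ and $\{x_i\}\cup\nb(i)\subseteq B(x_j,r)$, and $x_j\neq x_i$ since every point of $S_i'$ lies at distance at least $r^2/(2D)$ from $x_i$.

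I expect the geometric containment to be the main obstacle: one must verify carefully that the tips are the unique maximisers of the distance to $p^\ast$ over both arcs of $\partial\Lcal_i$, and then upgrade the $O(r^3/D^2)$ tip margin into a genuine two‑dimensional region of area $\Theta(r^5)$ with a clean uniform lower bound. The secondary nuisances are the degeneration of the lens as $D\downarrow r/2$ (harmlessly absorbed by the excluded case $\norm{x_i-c}<r/2$) and boundary effects near $\partial[0,1]^2$ (handled by keeping only the center‑ward portion of $S_i$). The fifth power in the area is precisely what forces the exponent $1/5$ in the radius threshold.
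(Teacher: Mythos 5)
Your overall architecture is the same as the paper's: cover the inward lens $\Lcal_i := B(x_i,r)\cap B(c,D)$, $D := \norm{x_i-c}$, by a ball $B(x_j,r)$ centered at a nearby vertex, lower-bound the area of admissible centers by $\Omega(r^5)$, and finish with a first-moment bound over the $n$ vertices (your probabilistic step and the exponent bookkeeping are fine, and for $D\geq r$ your geometry is correct). The gap is in the geometric core, and it sits exactly in the regime $r/2\leq D<r$ — a regime you cannot discard, since the cutoff $r/2$ is what makes the surviving set a clique in the dismantling. Two of your claims fail there. First, when $D<r/\sqrt{2}$ the point $p^*$ overshoots $c$, and the farthest point of $\Lcal_i$ from $p^*$ is not a tip but $x_i$ itself, at distance $r^2/(2D)$, which tends to $r$ as $D\downarrow r/2$: your ``margin of order $r^3/D^2$'' evaporates. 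Second, and fatally, for \emph{every} $D<r$ the set $S_i=B(p_1,r)\cap B(p_2,r)$ cut out by the two tip constraints is strictly larger than the true set of admissible centers $\{y:\Lcal_i\subseteq B(y,r)\}$; the identity of these two sets (the paper's unnumbered lemma following the definition of $W$) genuinely requires $\norm{x-y}>r$, because its proof uses that $B(y,r)$ has radius no larger than the second disc. Your side constraints do not repair this. Explicit counterexample: $r=1$, $c=(0,0)$, $x_i=(0.9,0)$, so $p_{1,2}=(0.344,\pm 0.832)$; take $y=(-0.095,-0.055)$. Then $\norm{y-p_1}\approx 0.99$, $\norm{y-p_2}\approx 0.89$, $\norm{y-x_i}\approx 0.997<1$, $\norm{y-c}=0.11<D$, and $y$ lies on the center-ward side, so $y\in S_i'$; yet the point $q^*=(0.779,0.45)$ lies on the inner arc of $\partial\Lcal_i$ and has $\norm{q^*-y}\approx 1.01>r$. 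Hence a vertex at $y$ would be accepted by your argument while failing to be adjacent to inward neighbours of $x_i$ placed just inside the circle near $q^*$: your concluding implication ``any $x_j\in S_i'$ satisfies $\nb(i)\subseteq B(x_j,r)$'' is false, so the union bound proves the wrong event.

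The missing idea is precisely the paper's virtual-center trick. Rather than covering the true lens $B(x_i,r)\cap B(c,D)$, the paper covers the larger lens $B(x_i,r)\cap B(\ctil,\norm{x_i-\ctil})$, where $\ctil$ is a phantom point on the ray from $x_i$ through $c$ at distance $K\max(r,1/\sqrt{2})>r$ from $x_i$. By the monotonicity observation~\eqref{eq:Anonincr}, any center admissible for the $\ctil$-lens is admissible for the $c$-lens; since the distance to the virtual center now always exceeds $r$, the two-tip characterization and the $\Omega(r^5)$ area bound apply uniformly over all $D\geq r/2$, and one then checks that this (tiny-diameter) region lies inside $B(c,D)\cap[0,1]^2$. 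If you insist on working with the true lens, you would need a genuinely different description of the admissible region when $r/2\leq D<r$ (it is in fact large there, of area on the order of $r^2$, but it is not $B(p_1,r)\cap B(p_2,r)$, nor any half of it). A secondary flaw to note: your claim that the region stays inside $[0,1]^2$ ``as it lies strictly closer to $c$ than $x_i$'' is not a valid inference — for $D>1/2$ points closer to $c$ than $x_i$ can lie outside the square — and this too needs the displaced-toward-$c$ geometry that the paper spells out via its small-angle estimates.
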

Assuming that Lemma \ref{lemma:pitfall} holds, the proof of Theorem \ref{thm:one} is straightforward dismantling of the random geometric graph.

\begin{proofof}{Theorem~\ref{thm:one}}
We can induce a strict ordering of the vertices according to their distance from the center $c$, in descending order. Indeed, for any vertices $x,y$, $\Pee( \norm{x-c} = \norm{y-c})=0$.
By Lemma \ref{lemma:pitfall}, the outermost vertex is a pitfall, and can be removed. We continue to remove vertices until the remaining vertices lie in $B(c, r/2)$. The graph induced by these remaining vertices forms a clique, which is dismantlable. By Theorem \ref{thm:pitfall}, the graph has $c(G)=1$.
\end{proofof}

The remainder of this section is devoted to proving Lemma \ref{lemma:pitfall}, which requires a series of intermediate geometric lemmas. 
For $x,y\in\eR^2$, let us write
\begin{equation}
\label{eqn:W}
W(x,y;r) := \{ z \in \eR^2 : B(z,r) 
\supseteq B(x,r)\cap B(y,\norm{x-y}) \}.
\end{equation}
Let $[x,y]$ denote the line segment between these two points. Note that
\begin{equation}\label{eq:Anonincr}
\text{if } z \in [x,y] \text{ then }
W(x,y;r) \subseteq W(x,z;r).
\end{equation}
Indeed, we have
$B(x,r) \cap B(z, \norm{x-z} ) \subseteq B(x,r)\cap B(y,\norm{x-y})$ so that $W(x,y;r) \supseteq 
W(x,z;r)$. 
Observe that $\area(W(x,y;r))$ does not depend on the exact locations of $x,y$, but only on $\norm{x-y}$ and $r$. We can thus denote $A(d,r) := \area(W(x,y;r))$ for an arbitrary pair $x,y$ with $\norm{x-y}=d$.
By observation~\eqref{eq:Anonincr}, the area $A(d,r)$ is nonincreasing in $d$ for a fixed $r$.

We give a simpler geometric characterization of $W(x,y;r)$ when $\norm{x-y} = d > r$.
Let $p_1, p_2$ denote the two intersection points
of $\partial B(x,r)$ and $\partial B(y,d)$. Denote 
\[ 
W'(x,y;r) := B(p_1,r)\cap B(p_2,r),
\]
as shown in Figure \ref{fig:W'}(a).

\begin{figure}[h!]
\begin{center}
\begin{tabular}{ccc}
\begin{tabular}{c}
\begin{tikzpicture}

\path (104:1.35) coordinate (P1);
\path (-104:1.35) coordinate (P2);
\path (-2.5,0) coordinate (Y);
\path (0,0) coordinate (Z);

\draw (Y) circle (2.5);
\draw (Z) circle (1.35);
\draw (P1) circle (1.35);
\draw (P2) circle (1.35);

\begin{scope}[shift  = (P2)]
	\draw[thin, gray,fill=gray!50] (78:1.34) arc (78:102:1.34) -- cycle;
	\draw[thick]  (78:1.34) arc (78:102:1.34);
\end{scope}

\begin{scope}[shift  = (P1)]
	\draw[thin,gray,fill=gray!50] (-78:1.34) arc (-78:-102:1.34) -- cycle;
	\draw[thick]  (-78:1.34) arc (-78:-102:1.34);
\end{scope}

\draw[fill] (Y) circle (2pt);
\draw[fill] (Z) circle (2pt);
\draw[fill] (P1) circle (2pt);
\draw[fill] (P2) circle (2pt);

\node[left] at (Y) {$y$};
\node[right=7pt] at (Z) {$x$};
\node[above=5pt] at (P1) {$p_1$};
\node[below=5pt] at (P2) {$p_2$};
\node at (-.35,.26) {$W'$};

\end{tikzpicture}

\end{tabular}

& \hspace{.5in} &
\begin{tabular}{c}

\begin{tikzpicture}

\path (1.5,0) coordinate (Y);
\path (0,0) coordinate (Z);

\draw[fill=gray!50] (Y) circle (1);
\draw (Z) circle (2);

\draw[very thick] (-30:2) arc (-30:30:2);

\draw[fill] (-30:2) circle (1pt);
\draw[fill] (30:2) circle (1pt);

\node at (0, .2) {$D'$};
\node at (1.5, 0) {$D$};

\end{tikzpicture}
   \end{tabular}
   \\
  (a) & & (b)
\end{tabular}
\end{center}
\caption{(a) The set $W'=W'(x,y;r)$. (b) When closed discs intersect,  the smaller disc $D$
contains the shortest arc on the bigger disc $D'$ between the 
intersection points of the boundaries.}
\label{fig:W'}
\label{fig:DDprime}
\end{figure}
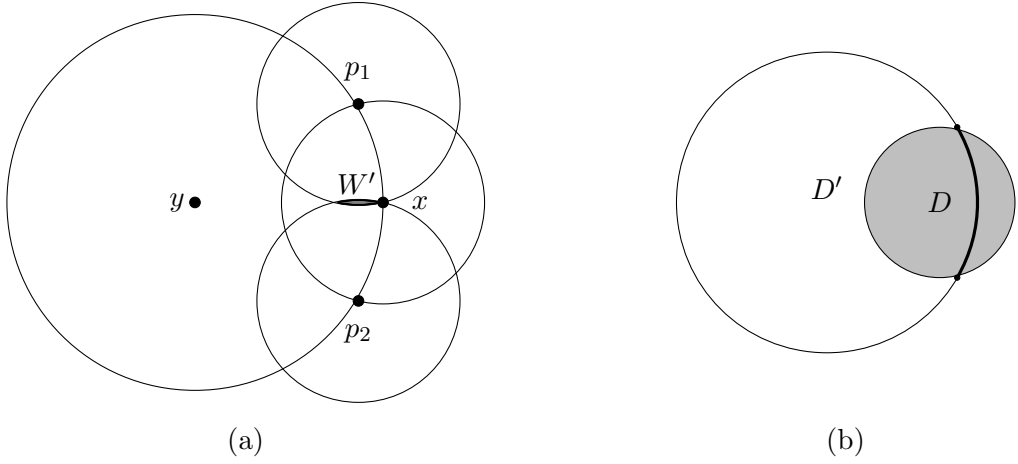

\begin{lemma}
If $\norm{x-y} = d > r$ then $W'(x,y;r) = W(x,y;r).$
\end{lemma}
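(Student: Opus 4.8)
The plan is to prove the two inclusions separately. Write $d=\norm{x-y}$ and $K:=B(x,r)\cap B(y,d)$, so that by definition $W(x,y;r)=\{z:K\subseteq B(z,r)\}$. The inclusion $W(x,y;r)\subseteq W'(x,y;r)$ is immediate: since $p_1,p_2\in\partial B(x,r)\cap\partial B(y,d)$ we have $\norm{p_i-x}=r\le r$ and $\norm{p_i-y}=d\le d$, so $p_1,p_2\in K$; hence any $z$ with $K\subseteq B(z,r)$ satisfies $\norm{z-p_1},\norm{z-p_2}\le r$, i.e.\ $z\in B(p_1,r)\cap B(p_2,r)=W'(x,y;r)$.

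For the reverse inclusion I would fix $z\in W'$ and show $K\subseteq B(z,r)$. As an intersection of two discs, $K$ is compact and convex, and the convex function $w\mapsto\norm{z-w}$ attains its maximum over $K$ at an extreme point, hence on $\partial K$; thus it suffices to check $\partial K\subseteq B(z,r)$. The boundary $\partial K$ is made of two circular arcs sharing the endpoints $p_1,p_2$: an arc $A_\ell$ of $\partial B(x,r)$ (the part lying inside $B(y,d)$) and an arc $A_r$ of $\partial B(y,d)$ (the part lying inside $B(x,r)$). Because $z\in W'$ we already know $\norm{z-p_1},\norm{z-p_2}\le r$, so it is enough to prove that on each of these arcs the distance to $z$ is maximized at an endpoint.

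The tool for this is the elementary fact that the distance from a fixed point $z$ to a point running over a circle of centre $o$ and radius $\rho$ is a unimodal function of the angle: it is maximized at the unique point of the circle in the direction from $o$ pointing away from $z$, and decreases monotonically along either arc toward the antipodal (nearest) point. Consequently, on any sub-arc not containing that single farthest point, the distance to $z$ is maximized at an endpoint of the sub-arc. So I would show that the farthest point of $\partial B(x,r)$ from $z$ does not lie on $A_\ell$, and the farthest point of $\partial B(y,d)$ from $z$ does not lie on $A_r$; in both cases the maximum over the arc is then attained at $p_1$ or $p_2$ and is $\le r$, yielding $A_\ell\cup A_r\subseteq B(z,r)$ and hence $z\in W$.

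To verify these two ``the far point misses the arc'' statements --- which is the crux --- I would place $x$ at the origin and $y=(-d,0)$. A direct computation gives $p_{1,2}=(-r^2/(2d),\pm r\sqrt{1-r^2/(4d^2)})$, shows that every point of $A_\ell$ has first coordinate $\le -r^2/(2d)<0$ while every point of $A_r$ has first coordinate $\ge -r^2/(2d)>-d$, and locates $W'$: since $p_1,p_2\in\partial B(x,r)$ the point $x$ is itself a tip of the lens $W'=B(p_1,r)\cap B(p_2,r)$, and the symmetry of this lens about the line $xy$ forces $W'\subseteq\{(a,b):-r^2/d\le a\le 0\}$. For $z\in W'$ with $z\ne x$ this gives $z_x<0$, so the point of $\partial B(x,r)$ farthest from $z$ lies in $\{a>0\}$ and misses $A_\ell$; and $z_x\ge -r^2/d>-d$, so the point of $\partial B(y,d)$ farthest from $z$ lies in $\{a<-d\}$ and misses $A_r$. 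The degenerate case $z=x$ is trivial, since $K\subseteq B(x,r)$ by the very definition of $K$. I expect that locating $W'$ relative to the two arcs is the only delicate step; the remainder is the routine unimodality of distance-to-a-circle combined with the convexity reduction to $\partial K$.
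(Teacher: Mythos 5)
Your proof is correct. It shares the paper's overall skeleton: the inclusion $W(x,y;r)\subseteq W'(x,y;r)$ is proved the same way (via $p_1,p_2\in B(x,r)\cap B(y,d)$), and the reverse inclusion is likewise reduced to showing that $B(z,r)$ contains the two arcs making up $\partial K$, where $K:=B(x,r)\cap B(y,d)$, followed by a convexity step passing from $\partial K$ to $K$. Where you genuinely differ is the mechanism for covering the arcs. The paper invokes a single coordinate-free fact: a closed disc meeting a disc of equal or larger radius contains the shortest arc of the larger circle between the two boundary intersection points; applied to $B(z,r)$ versus $B(x,r)$ (equal radii) and versus $B(y,d)$ (larger radius --- this is where $d>r$ enters), it yields $\partial K\subseteq B(z,r)$ with no information needed about where $z$ sits inside $W'$. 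You instead use unimodality of the distance-to-$z$ function along each circle, so that its maximum over a sub-arc avoiding the farthest point is attained at $p_1$ or $p_2$, and then verify by explicit coordinates that the farthest points miss $A_\ell$ and $A_r$; this forces you to localize $W'$ inside the slab $\{-r^2/d\le a\le 0\}$, an extra step the paper's route never needs. That localization is true and is indeed your only delicate point; the ``tips plus symmetry'' justification is terse but completable in one line: for $z\in W'$, summing $\norm{z-p_1}^2\le r^2$ and $\norm{z-p_2}^2\le r^2$ gives $(z_x+r^2/(2d))^2\le r^4/(4d^2)-z_y^2$, which yields both the slab containment and your claim that $z\ne x$ implies $z_x<0$. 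The trade-off: your argument is longer and computational but fully self-contained, whereas the paper's is slicker but leaves implicit how its disc fact, stated for boundary intersection points, applies to $p_1,p_2$ (one must observe that the arc of $\partial B(x,r)$ inside $B(z,r)$ is a minor arc containing $p_1,p_2$, hence contains the minor $p_1p_2$ arc, which is exactly $A_\ell$) --- a step your unimodality argument handles explicitly.
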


\begin{proof}
Pick any $z\in W(x,y;r)$. We must have $p_1, p_2 \in B(z,r)$, which means that $z \in B(p_1,r) \cap B(p_2,r)$ Therefore $W(x,y;r) \subseteq W'(x,y;r)$.

Picking any $z \in W'(x,y;r)$, we have $p_1,p_2 \in B(z,r)$. 
Observe that if a closed disc $D$ intersects a disc $D'$ of the same or larger radius
then $D$ contains the shortest circular arc along $\partial D'$ between the two intersection points
of $\partial D$ and $\partial D'$, see Figure~\ref{fig:DDprime}(b).
So $B(z,r)$ contains the part of $\partial B(x,r)$ between $p_1$ and $p_2$ that lies inside
$B(y,d)$. Using that $d > r$, $B(z,r)$ also contains the part of $\partial B(y,d)$ between $p_1$ and $p_2$ that falls inside $B(x,r)$. 
Thus $B(z,r)$ contains $\partial\left( B(x,r)\cap B(y,\norm{x-y}) \right)$.
Because both $B(z,r)$ and $B(x,r)\cap B(y,\norm{x-y})$ are convex, it now also 
follows that $B(x,r)\cap B(y,d) \subseteq B(z,r)$. This shows that $W'(x,y;r) \subseteq W(x,y;r)$.
\end{proof}

We now compute a lower bound for $A(d,r)$ for distant vertices $x,y$.

\begin{lemma}
\label{lemma:AprimeLB} 
If $d=K \cdot \max \left( r, 1/\sqrt{2}  \right)$ where $K > 1$ is a sufficiently large constant,  then $A(d,r) =  \Omega(r^5).$ 
\end{lemma}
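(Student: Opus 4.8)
The plan is to reduce everything to the area of a \emph{lens} (the intersection of two equal-radius discs) and then bound that area from below by inscribing a rhombus, thereby avoiding inverse-trigonometric expressions entirely. Since $K>1$ we have $d = K\max(r,1/\sqrt{2}) \geq Kr > r$, so the preceding lemma applies and $W(x,y;r) = W'(x,y;r) = B(p_1,r)\cap B(p_2,r)$, where $p_1,p_2$ are the two intersection points of $\partial B(x,r)$ and $\partial B(y,d)$. In particular $A(d,r)$ equals the area of this lens.

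First I would locate $p_1,p_2$ explicitly. Placing $x$ at the origin and $y=(-d,0)$, the equations $\norm{p-x}=r$ and $\norm{p-y}=d$ force the common first coordinate $p_x = -r^2/(2d)$, so that
\[
\norm{p_1-p_2} = \delta := 2r\sqrt{1 - \tfrac{r^2}{4d^2}} \in \left(r\sqrt{3},\, 2r\right).
\]
Thus $W'$ is the intersection of two discs of radius $r$ whose centers are a distance $\delta < 2r$ apart, and the lens is nondegenerate.

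Next, since $W'$ is convex (an intersection of convex discs), it contains the convex hull of any four of its boundary points; I would take the two ``tips'' of the lens lying on the segment $[p_1,p_2]$ together with the two points where $\partial B(p_1,r)$ and $\partial B(p_2,r)$ meet. These four points form a rhombus whose diagonals have lengths $2r-\delta$ (along the center axis) and $2\sqrt{r^2-\delta^2/4}$ (perpendicular to it), so
\[
A(d,r) \geq (2r-\delta)\sqrt{r^2 - \tfrac{\delta^2}{4}}.
\]
Substituting $\delta = 2r\sqrt{1-r^2/(4d^2)}$ gives $\sqrt{r^2 - \delta^2/4} = r^2/(2d)$ exactly, while the elementary inequality $1-\sqrt{1-t}\geq t/2$ (valid for $t\in[0,1]$, applied with $t = r^2/(4d^2)$) yields $2r-\delta \geq r^3/(4d^2)$. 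Multiplying gives the clean bound $A(d,r) \geq r^5/(8d^3)$.

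Finally, since all points lie in $[0,1]^2$ we may assume $r\leq\sqrt{2}$ (otherwise $G$ is a clique and the statement is trivial), so $\max(r,1/\sqrt{2})\leq\sqrt{2}$ and hence $d\leq K\sqrt{2}$. Therefore $A(d,r)\geq r^5/(8(K\sqrt{2})^3) = \Omega(r^5)$, as claimed. I expect no serious obstacle: the only point requiring care is resisting the temptation to evaluate the lens area exactly (which introduces $\cos^{-1}$), since the inscribed-rhombus lower bound is both simpler and sufficient. It is also worth noting that the regime relevant to the application has $r\to 0$ with $d=\Theta(1)$, so the exponent $5$ is genuinely the correct order of magnitude here.
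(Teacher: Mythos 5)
Your proof is correct, and while it starts from the same reduction as the paper, the key estimate is carried out by a genuinely different method. Both arguments use the preceding lemma to identify $A(d,r)$ with the area of the lens $B(p_1,r)\cap B(p_2,r)$ (valid since $d>r$). From there the paper computes the lens area \emph{exactly}, as $r^2\left(2\alpha-\sin(2\alpha)\right)$ via a sector-minus-triangle decomposition (equation \eqref{eq:Aprimesin}), extracts $s=r^2/2d=\Omega(r^2)$ from the right-triangle relations, and then invokes Taylor expansions of $\sin$ to get $\alpha=\Omega(r)$ and $2\alpha-\sin(2\alpha)=\Omega(\alpha^3)$, hence $\Omega(r^5)$. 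You instead inscribe a rhombus in the lens --- legitimate, since the lens is convex and your four vertices lie in it (here $r<\delta<2r$ matters) --- and replace all trigonometry by the exact identity $\sqrt{r^2-\delta^2/4}=r^2/(2d)$ together with the elementary inequality $1-\sqrt{1-t}\geq t/2$, arriving at the explicit, non-asymptotic bound $A(d,r)\geq r^5/(8d^3)$; I checked the computations ($p_x=-r^2/(2d)$, $\delta=2r\sqrt{1-r^2/(4d^2)}$, the rhombus area) and they are all right. Your route buys a cleaner argument with explicit constants and no asymptotic expansions, and it also makes explicit a point the paper glosses over: the conclusion $\Omega(r^5)$ genuinely requires $d=O(1)$, i.e.\ $r=O(1)$ (for $r\to\infty$ the claim would fail, since $A(d,r)\leq\pi r^2$), which your step $d\leq K\sqrt{2}$ handles. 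What the paper's route buys is reuse: the exact formula \eqref{eq:Aprimesin} is cited again in the proof of Theorem \ref{thm:multi} to obtain a matching \emph{upper} bound $\area(W(x,y))=O(r^2\beta^3)$, something an inscribed-polygon argument can never provide. One cosmetic remark: your justification ``otherwise $G$ is a clique'' imports graph language into a purely geometric lemma; it would be cleaner to say simply that the lemma is only invoked (and only true) for $r\leq\sqrt{2}$, which is what your final step actually uses.
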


\begin{proof}
Choose $x,y \in \R^2$ with $\norm{x-y} = d$. 
The geometry of $W=W(x,y,r)$ is shown in Figure~\ref{fig:dralpha}.
We have

\begin{equation}\label{eq:Aprimesin}
\begin{array}{rcl}
\area(W) & = & 
4 \left( \pi r^2 \left( \displaystyle{\frac{\alpha}{2\pi}} \right) 
-   \frac12 r^2 \cos(\alpha)\sin(\alpha) \right)\\
&  = & 
r^2 \cdot \left( 2\alpha - \sin(2\alpha) \right).
\end{array}
\end{equation}
Indeed, the expression $\pi r^2 \left( \frac{\alpha}{2\pi} \right)$ equals the
area of a slice of opening angle $\alpha$ out of a disc of radius $r$, and
the term  $\frac12 r^2 \cos(\alpha)\sin(\alpha)$ equals
the area of a triangle with sides
$h = r \cos(\alpha)$ and $s = r \sin(\alpha)$.
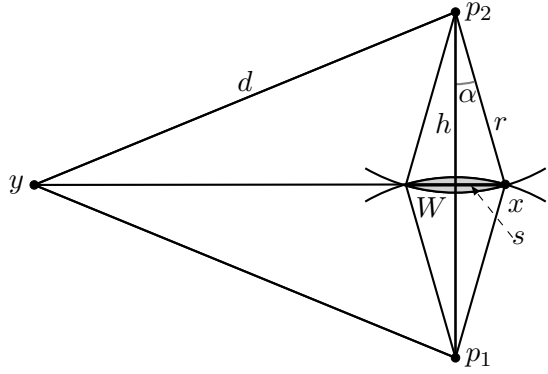
\begin{figure}
\begin{center}
\begin{tikzpicture}[scale=.8]

\path (0,0) coordinate (P1);
\path (0,5.75) coordinate (P2);
\path (-7,2.875) coordinate (Y);
\path (74:3) coordinate (Z);

\draw[thick] (60:3) arc (60:120:3);
\draw[thick] (106:3) -- (0,0) -- (74:3);
\draw[thick,fill=gray!35] (74:3) arc (74:106:3) -- cycle;

\begin{scope}[shift  = (P2)]
	\draw[thick] (-60:3) arc (-60:-120:3);
	\draw[thick] (-106:3) -- (0,0) -- (-74:3);
	\draw[thick,fill=gray!35] (-74:3) arc (-74:-106:3) -- cycle;
	\draw[thick,gray] (-90:1.2) arc (-90:-75:1.2);
	
	\node at (-81.5:1.4) {$\alpha$};
	
\end{scope}   

\draw[thick] (P1) -- (P2) -- (Y) -- cycle;
\draw[thick] (P1) -- (P2) -- (Y) -- cycle;
\draw[thick] (Y) -- (Z);

\draw[fill] (Y) circle (2pt);
\draw[fill] (Z) circle (2pt);
\draw[fill] (P1) circle (2pt);
\draw[fill] (P2) circle (2pt);

\node[left] at (Y) {$y$};
\node[below right] at (.7,2.8) {$x$};
\node[right] at (P1) {$p_1$};
\node[right] at (P2) {$p_2$};

\node at (-3.5,4.6) {$d$};
\node at (-.2,3.9) {$h$};
\node at (.75,3.9) {$r$};

\node at (-.4,2.5) {$W$};
\node at (1.05,2.0) {$s$};
\draw[dashed,-latex] (.95,2.0) -- (.25, 2.875);

\end{tikzpicture}
\end{center}
\caption{Determining the area of $W=W(x,y;r)$.\label{fig:dralpha}}
\end{figure}
Also note that
$d^2  =  h^2 + (d-s)^2$ and $r^2  =  h^2 + s^2$,
giving
\[ 
s = r^2 / 2d  = \min \left( \frac{r^2}{ K\sqrt{2}}, \frac{r}{2K} \right) = \Omega( r^2 ). 
\]
Thus,  $\sin(\alpha) = s / r = \Omega( r )$, and 
because $\sin(x) = x + o(x^3)$, this also gives $\alpha = \Omega(r)$.
The approximation  $x - \sin(x) = x^3 / 6 + o(x^5)$, together with~\eqref{eq:Aprimesin}, 
proves the lemma.
\end{proof}

Our next lemma places a lower bound on $\area(W(x,c;r))$ where $c=(\frac{1}{2}, \frac{1}{2})$ is the center of the unit square.
\begin{lemma} 
\label{lemma:Wprimequart}
For all $x \in [0,1]^2$ with $\norm{x-c} \geq r/2$, we have
$$
 \area\left( W(x,c;r) \cap [0,1]^2 
\cap B(c, \norm{x-c}) \right) = \Omega(r^5). $$

\end{lemma}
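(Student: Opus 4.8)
Write $d:=\norm{x-c}$, so that $r/2\le d\le 1/\sqrt2$ and $W:=W(x,c;r)$ has $\area(W)=A(d,r)$. The plan is to treat the two extra constraints, $B(c,d)$ and $[0,1]^2$, as cheap modifications of $W$, and to split according to whether $d<r/\sqrt2$ (``$x$ close to the centre'') or $d\ge r/\sqrt2$ (``$x$ far from the centre''). The crucial structural fact I will use repeatedly is a containment: letting $p_1,p_2$ be the two intersection points of $\partial B(x,r)$ and $\partial B(c,d)$ (which exist because $d\ge r/2$), every $z\in W$ has $p_1,p_2\in B(x,r)\cap B(c,d)\subseteq B(z,r)$, so $W\subseteq B(p_1,r)\cap B(p_2,r)$; adding the two inequalities $\norm{z-p_i}^2\le r^2$ cancels the cross term and gives $W\subseteq B(m,h')$, where $m:=\tfrac12(p_1+p_2)$ and $h':=r^2/(2d)$.

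In the far case $d\ge r/\sqrt2$ we have $h'=r^2/(2d)\le d$, so $\norm{m-c}=d-h'$ and $\norm{m-c}+h'=d$; thus $B(m,h')$ is internally tangent to $B(c,d)$ at $x$ and $W\subseteq B(m,h')\subseteq B(c,d)$. Hence $W\cap B(c,d)=W$, and since $A(\cdot,r)$ is nonincreasing by~\eqref{eq:Anonincr} and $d\le 1/\sqrt2\le K\max(r,1/\sqrt2)$, Lemma~\ref{lemma:AprimeLB} yields $\area(W)=A(d,r)\ge A(K\max(r,1/\sqrt2),r)=\Omega(r^5)$. It then remains only to intersect with the square. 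From $\norm{m-x}=h'$ we get $W\subseteq B(x,2h')$; if $\dist(x,\partial[0,1]^2)\ge 2h'$ then $W\subseteq[0,1]^2$ and we are done. Otherwise $x$ lies within $2h'=r^2/d$ of some edge, and a short computation (using $d\ge r/2$ to discard a spurious small root) shows this forces $d=\Omega(1)$; then $W$ is a very thin lens with apex $x$, long axis of length $r^2/d=O(r^2)$ along $(c-x)/d$, and half-width $r-\sqrt{r^2-h'^2}=O(r^3)$.

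In the close case $d<r/\sqrt2$ (so in particular $d<r$) I use instead a reverse containment: for any $z$ with $\norm{z-c}\le r-d$ and any $q\in B(x,r)\cap B(c,d)$ one has $\norm{z-q}\le\norm{z-c}+\norm{c-q}\le(r-d)+d=r$, whence $B(c,r-d)\subseteq W$. Since $d<r/\sqrt2$, the point $x$ and the whole disc $B(c,d)$ lie deep inside $[0,1]^2$ for $r$ small, so intersecting with the square is free here; and $\min(r-d,d)=\Omega(r)$, so $\area\!\big(W\cap B(c,d)\cap[0,1]^2\big)\ge\area\!\big(B(c,\min(r-d,d))\big)=\Omega(r^2)$, which is in particular $\Omega(r^5)$. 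This disposes of the close case entirely.

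The one genuinely delicate estimate is the boundary correction in the far case, and especially near a corner of $[0,1]^2$, where two edges simultaneously cut into $W$ and the naive symmetry bound (which already gives a factor $\tfrac12$ when a single edge is involved) fails. The plan there is: because $d=\Omega(1)$, the inward normal of each of the at most two relevant edges makes angle at most $60^\circ$ with the lens axis $(c-x)/d$, so a point of $W$ can fall outside the corresponding interior half-plane only within an $O\!\big(r-\sqrt{r^2-h'^2}\big)=O(r^3)$ neighbourhood of the apex $x$. As the lens has half-width $O(r^3)$ and opening angle $O(1)$ at its apex, this exceptional set has area $O(r^6)$; summing over the two edges gives $\area(W\setminus[0,1]^2)=o(r^5)$, so that $\area(W\cap[0,1]^2)\ge\area(W)-o(r^5)=\Omega(r^5)$. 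I expect making this apex estimate uniform in $d$, together with the corner bookkeeping, to be where the real work lies.
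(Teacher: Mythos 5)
Your close case and your far/interior case are essentially correct, and the interior argument is clean: the parallelogram-law containment $W(x,c;r)\subseteq B(m,r^2/2d)$, combined with \eqref{eq:Anonincr} and Lemma~\ref{lemma:AprimeLB}, handles every $x$ at distance more than $2h'$ from $\partial[0,1]^2$ (the close case needs only the trivial fix of capping your inscribed disc's radius at $1/2$, since $B(c,d)\subseteq[0,1]^2$ can fail for constant-order $r$). The genuine gap is the far/boundary case, exactly the part you defer as ``where the real work lies,'' and the sketch you give for it does not survive scrutiny. First, the $60^\circ$ claim is not a consequence of $d=\Omega(1)$: for an edge $E$ with inward normal $n_E$ one has $\cos\angle\bigl((c-x)/d,\,n_E\bigr)=(\tfrac12-\dist(x,E))/d$, and your notion of ``relevant'' (edges within $2h'=r^2/d$ of $x$) permits $\dist(x,E)$ near $\tfrac12$ once $r$ is of constant order, making this cosine arbitrarily small. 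One can instead restrict to edges the lens can actually cross, i.e. $\dist(x,E)\le r-\sqrt{r^2-h'^2}$, and then the $60^\circ$ bound does hold --- but only for $r$ below an absolute constant, roughly $r\le 1/2$. Second, even granting the $O(r^6)$ estimate, the step ``$\area(W\cap[0,1]^2)\ge\area(W)-o(r^5)=\Omega(r^5)$'' is an asymptotic comparison that is vacuous when $r$ is bounded away from $0$: the lemma must hold uniformly for all $r\le\sqrt2$, and your boundary case is nonvacuous up to $r=1$ (e.g. $x$ near a corner and $d$ near $1/\sqrt2$), where $O(r^6)$ and $\Omega(r^5)$ are both just constants and the subtraction proves nothing without explicit constant control. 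Finally, describing $W$ itself as ``a very thin lens'' uses the identity $W=W'$, which the paper proves only for $d>r$; in your far case $d>r$ is not guaranteed, and you only have the one-sided inclusion $W\subseteq W'$.

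The idea you are missing is the one the paper uses to make the boundary case disappear entirely: apply the monotonicity \eqref{eq:Anonincr} geometrically, not just for the area bound. Replace $c$ by the point $\ctil$ on the ray from $x$ through $c$ with $\norm{x-\ctil}=K\max(r,1/\sqrt2)$; then $W(x,\ctil;r)\subseteq W(x,c;r)$, and for $K$ large this smaller set is a sliver of diameter less than $r/10^{10}$ and opening angle less than one degree, attached to $x$ and hugging the initial portion of the segment $[x,c]$. Such a sliver lies inside $[0,1]^2\cap B(c,\norm{x-c})$ no matter where $x$ sits --- even at a corner, the direction from $x$ toward the center makes an angle of roughly $45^\circ$ or more with every nearby edge --- and this holds uniformly in $r$, so Lemma~\ref{lemma:AprimeLB} applied to $W(x,\ctil;r)$ finishes immediately. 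If you shrink the lens this way first, all of your apex, edge, and corner bookkeeping becomes unnecessary.
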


\begin{proof}
Pick the point $\ctil$ on the line $L$ containing $x$ and $c$, so that $c \in [\ctil, x]$ and
$\norm{x-\ctil} = d = K \cdot \max( r, 1/\sqrt{2} )$, see  Figure~\ref{fig:ctil}.
\begin{figure}[h]
 \begin{center}
\begin{tikzpicture}[scale=.7]

\path (0,0) coordinate (X);
\path (-6,0) coordinate (Y);
\path (-1.25,-1) coordinate (C1);
\path (218:6) coordinate (C2);
\path (38:1.75) coordinate (L);

\draw[fill] (X) circle (2pt);
\draw[fill] (C1) circle (2pt);
\draw[fill] (C2) circle (2pt);

\draw[dashed] (X) ++ (160:6) arc (160:225:6);
\draw[dashed] (X) -- (Y);

\draw (-3.25,-3) -- (.75,-3) -- (.75,1) -- (-3.25,1) -- cycle;

\draw (218:7) -- (38:2);

\node[below right] at (X) {$x$};
\node[below right] at (C1) {$c$};
\node[above=2pt] at (C2) {$\ctil$};
\node[below right] at (L) {$L$};

\node[above] at (-2.5,0) {$d$};

\node[below] at (-3.25,-3) {$(0,0)$};
\node[above] at (.75,1) {$(1,1)$};

\end{tikzpicture}
 \end{center}
\caption{Choosing $\ctil$ such that $c\in[\ctil,x]$.\label{fig:ctil}}
\end{figure}
By equation~\eqref{eq:Anonincr},   $W(x,\ctil;r) \subseteq W(x,c;r)$.
Provided that $K$ is sufficiently large,  
we have $\diam(W(x, \ctil; r)) < r / 10^{10}$. Furthermore, both the angle between $\partial B(p_1,r)$ and
the line $L$ at their intersection points, and the angle between $\partial B(p_2,r)$ and the line $L$ 
at their intersection points will be less than 1 degree.
It follows directly that
$ W(x,\ctil;r) \subseteq [0,1]^2 \cap B(c,\norm{x-c})  $
for every $x  \in [0,1]^2 \setminus B(c,r/2)$.
Applying Lemma \ref{lemma:AprimeLB} completes the proof.
\end{proof}

We conclude this section with the proof of our main lemma: that
for every vertex $x_i$ such that $\norm{x_i - c} > r/2$, there is a $j \in \nb(i)$ such that $\nb(i)\subseteq\nb(j)$.

\begin{proofof}{Lemma \ref{lemma:pitfall}}
We can assume without loss of generality that
$r \leq \sqrt{2}$ (otherwise $\norm{x_i-c} < r/2$ holds trivially for all $i$).
Let $Z$ denote the number of indices $i$ such that $\norm{x_i-c} \geq r/2$ and there
is no $j\in\nb(i)$ such that $\nb(j) \supseteq \nb(i)$.
Then $\Ee Z$ can be bounded above by:
\[
\begin{array}{rcl}
\Ee [Z] 
& \leq & 
\displaystyle 
n \int_{[0,1]^2 \setminus B(c,r/2)}
\left( 1 - \area( W(x,c;r) \cap [0,1]^2 ) \right)^{n-1} \dd \! x \\
& \leq & 
\displaystyle 
n \left(1-\Omega( r^5 )\right)^{n-1} 
\, \leq  \, 
\displaystyle
n \exp\left[ - \Omega( n r^5 ) \right] 
\end{array} 
\]
Thus, if we chose $K_2$ sufficiently large we have $\Ee Z \leq  \exp[ \log n - \Omega( n r^5 ) ]
= \exp[ - \Omega(\log n) ] = o(1)$.
So the assertion of the lemma holds whp.
\end{proofof}


\section{$\RGG$ near the connectivity threshold is not cop-win}
\setcounter{figure}{0}

In this section, we prove that some random geometric graphs require at least two cops. In particular,  when  we are near the connectivity threshold,  the graph is not dismantlable whp.

\begin{proofof}{Theorem \ref{thm:multi}}
Without loss of generality we can assume  $r \geq \frac12\sqrt{\log n/n}$, because by a result of Penrose~\cite{penroseMST}
our graph is disconnected whp for smaller choices of $r$ (obviously a disconnected graph is not cop-win). 
We will show that there is a small constant $K_3>0$ such that if $r \leq K_3 \log n / \sqrt{n}$  then
whp the graph is not dismantlable. 

Intuitively, we are hunting for a subset of $[0,1]^2$ as shown in Figure \ref{fig:necklace}. 
Start with an $N$-gon with side length $\rho_1$, slightly smaller than $r$. 
Draw a small disc $B(c_i, \rho_2)$ around each corner, where $\rho_1+2\rho_2 = r.$ 
We want each  disc $B(c_i, \rho_2)$ to contain exactly one vertex of $G$, say $x_i$. 
Next, we consider the sets $B(x_{i-1}, r) \cap B(x_{i+1},r)$. We want this intersection to  
contain no other vertices besides $x_i$. If we can find such a structure, it creates a cycle $\{x_1, \ldots x_N\}$ in $G$ 
such that $x_i$ the only vertex in $G$ that is adjacent to both $x_{i-1}, x_{i+1}$ (addition modulo $N$). 
Therefore $G$ is not dismantlable because none of the $x_i$ will ever become pitfalls.

\begin{figure}[h]
\begin{center}

\begin{tikzpicture}[scale=1.5]
       
\foreach \x in  {-45,0, 45,90} {
	\draw[fill=gray!35] (\x:2) circle (.2);
}       

\draw[color=black!75]   (-90:2) \foreach \x in { -45,0,45,90,135} {
                -- (\x:2)
           };

\path(-38:2.5) coordinate (C0);
\path(-10:2.8) coordinate (C1);
\path(-12:2.8) coordinate (CC1);
\path(55:2.2) coordinate (C2);
\path(90:.5) coordinate (W);

\path(-43.5:2.07) coordinate (X0);
\path(-1:1.87) coordinate (X1);
\path(41.5:2.07) coordinate (X2);

\node[below=3pt] at (C0) {$B(c_{i-1}, \rho_2)$};
\node at (CC1) {$B(c_i, \rho_2)$};
\node[right] at (C2) {$B(c_{i+1}, \rho_2)$};
\node[above] at (W) {$B(x_{i-1} ,r) \cap B(x_{i+1}, r)$};

\draw[-latex] (W) -- (13:.7);
\draw[-latex] (C1) -- (0:2.2);

\draw[fill] (X0) circle (1pt);
\draw[fill] (X1) circle (1pt);
\draw[fill] (X2) circle (1pt);

\node[right=2.5pt] at (X0) {$x_{i-1}$};
\node[left=2.5pt] at (X1) {$x_i$};
\node[right=2.5pt] at (X2) {$x_{i+1}$};

\node at (70:2.05) {$\rho_1$};

\begin{scope}[shift  = (X0)]
	\draw[thick] (45:1.75) arc (45:145:1.75);
	\draw[dashed] (0,0) -- (135:1.75);
	\node at (146:.8) {$r$};
\end{scope}   
	
\begin{scope}[shift  = (X2)]
	\draw[thick] (-30:1.75) arc (-30:-135:1.75);
	\draw[dashed] (0,0) -- (-40:1.75);
	\node at (-21:1) {$r$};
\end{scope}   

\end{tikzpicture}

\caption{For an $N$-gon with side length $\rho_1$, we want each $B(c_i,\rho_2)$ to contain a single vertex, and we want each  $B(x_{i-1} ,r) \cap B(x_{i+1}, r)$ to contain no additional vertices.}
\label{fig:necklace}
\end{center}

\end{figure}
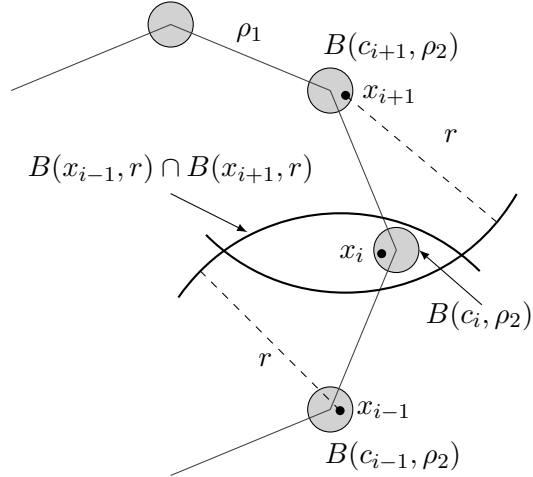
We now prove the existence of such a structure.  Let $N$ denote the number of vertices of the cycle; we will specify this value later. 
Set $\rho_1 = r - r/N^2$ and $\rho_2 = r/2N^2$.
Consider a regular $N$-gon $\Gamma \subseteq [0,1]^2$, whose edges each have 
length $\rho_1$. 
(Once we fix our choice of $N$, we shall see later that $\Gamma$ fits easily inside the unit square $[0,1]^2$.)
Let us label the corners of $\Gamma$ as $c_0, \dots, c_{N-1}$
for convenience, where of course $c_i$ is next to $c_{i-1}$ and $c_{i+1}$ (addition 
of indices modulo $N$).
We will insist that, for each $0\leq i \leq N-1$ there is a
point $x_{j_i} \in B(c_i, \rho_2)$ with
\begin{equation}
\label{eqn:in-small-ball}
\{x_1,\dots,x_n \} \cap B(c_{i},\rho_2) = \{x_{j_i}\}, 
\end{equation}
and the point $x_{j_i}$ is also the unique 
common neighbor of the two points $x_{j_{i-1}}$ and $x_{j_{i+1}}$, i.e.
\begin{equation}
\label{eqn:in-intersect}
\{ x_1, \dots, x_n \} \cap B(x_{j_{i-1}},r) \cap B(x_{j_{i+1}},r)
= \{ x_{j_i} \}.
\end{equation}
Observe that
\begin{eqnarray*}
\norm{c_{i+1}-c_{i-1}} 
& = & 
2 \rho_1 \sin \left(\frac{\pi(N-2)}{2N} \right) \,= \, 2 \rho_1 \cos \left( \frac{\pi}{N}\right)
\\
& =  &
2r\left(1-{1/N^2} \right)\left(1 - O\left(1/N^2\right)\right) 
\, = \, 
2r - O \left({r/N^2} \right)
\end{eqnarray*}
using the Taylor approximation $\cos(x) = 1 - \frac12 x^2 + O(x^4)$. Hence
for any $x\in B(c_{i+1},\rho_2)$ and $y \in B(c_{i-1},\rho_2)$ we also have
$\norm{x-y} = 2r - O(r/N^2)$.
Let us write $W(x,y) := B(x,r) \cap B(y,r)$.
By the same computation as equation \eqref{eq:Aprimesin},
\[
\area(W(x,y)) = r^2 (2\beta - \sin(2\beta)) = O( r^2 \beta^3 ), 
\] 
where $\beta$ is a small angle with $\cos\beta = \frac12\norm{x-y} / r = 1 - O(1/N^2)$, so that 
$\beta = O(1/N)$ (again using the Taylor expansion of cosine), Hence
\begin{equation}
\label{eqn:area-lower-bound}
\area(W(x,y)) = O( r^2 / N^3 ). 
\end{equation}

Rather than computing directly in the standard random geometric graph, it helps
to consider a ``Poissonized'' version.
Consider an infinite sequence $x_1,x_2, \dots$ of random points, i.i.d.~uniformly at random
on the unit square.
The ordinary random geometric graph, which we will denote by $G_O$ for the rest of the
proof, is just $G(x_1,\dots,x_n;r)$.
Now let $Z \isd \Po(n)$ be a Poisson random variable of mean $n$, independent of
the points $x_1,x_2,\dots$ and consider the random geometric graph $G(x_1,\dots, x_Z;r)$ on the
points $x_1,\dots,x_Z$ which we will denote by $G_P$.
Observe that the points $x_1,\dots,x_Z$ constitute a Poisson process of intensity 
$n$ on the unit square, which has the convenient properties that 
for every $A \subseteq [0,1]^2$ the number of points
that fall in $A$ is a Poisson random variable with mean $n\cdot\area(A)$, and
that for any two disjoint sets $A,B$ the number of points in $A$ is independent
of the number of points in $B$ (cf.~\cite{kingmanboek}).
This makes $G_P$ slightly easier to handle than $G_O$.
We shall first do our probabilistic computations for the Poissonized version $G_P$ and then we'll derive
the results for the original model $G_O$ from those for the Poissonized one.

Let us say the polygon $\Gamma$ is \emph{good} if it satisfies the demands
of equations \eqref{eqn:in-small-ball} and \eqref{eqn:in-intersect}  with $Z$ swapped for $n$.
Employing the useful independence properties of the Poisson process we
now see that
\[ 
\begin{array}{rcl}
\Pee[ \Gamma \text{ is good} ] 
& =  & 
\left( \Pee[\Po( (n \pi r^2 / 4N^4 ) = 1 ] \right)^N
\cdot \Pee[\Po( n \cdot O( r^2 / N^2 ) ) = 0 ] \\
& = & 
\left( 
(n \pi r^2 / 4N^4) \exp (-n \pi r^2 / 4N^4) \right)^N \cdot \exp( - O( nr^2 / N^2 ) ) \\
& = & 
\exp \left( N \log(n\pi r^2/4) - 4N \log N - O( nr^2 / N^2 ) \right).
\end{array} 
\]
Considering the right hand side of the first inequality, the first term is the probability that the $N$ discs $B(c_i,\rho_2)$ contain exactly one random point, and the second term is  the probability that the $N$ sets
$(B(x_{i-1},r) \cap B(x_{i+1},r)) \backslash B(c_i, \rho_2)$ contain no random points.
We now choose $N = \lceil \left(n \pi r^2\right)^{1/4} \rceil$ and choose $K_3>0$ to be small enough so that we obtain
\[ 
\Pee( \Gamma \text{ is good} )  
\geq \exp \left( -O \left( \sqrt{nr^2} \right)  \right)
\geq \exp \left( - \frac12  \log n \right)  = n^{-\frac12}
\]
because  $r \leq K_3  \log n/ \sqrt{n}$ by assumption.
Also note that as promised before, the polygon $\Gamma$ fits easily inside the unit
square as it has  diameter $O( r N ) = O( r (n r^2)^{1/4} ) = o(1)$.
 
Let us now place shifted copies $\Gamma_1,\dots, \Gamma_M$ of $\Gamma$ inside the unit square
in such a way that they are contained in $[0,1]^2$ and their centers are separated by at least $10 \diam(\Gamma) =
\Theta( r N ) = \Theta( n^{1/4} r^{3/2} ) = n^{-1/2+o(1)}$.
(Recall we assumed without loss of generality that $r = \Omega(\sqrt{\log n/n})$.)
Then we can place $M = \Omega( (1/rN)^{2} ) = n^{1-o(1)}$ such shifted copies, with their centers forming a lattice in $[0,1]^2$.
Let $X$ denote the number of $\Gamma_i$s that are good.
Now notice that the events that the $\Gamma_i$ are good are independent of each other as they
concern disjoint areas of the plane.
Hence $X$ is distributed like a binomial with parameters $M = n^{1-o(1)}$ and $p \geq n^{-\frac12}$.
Thus:
\[ 
\Pee[ X = 0 ] = (1-p)^M \leq e^{-Mp} \leq e^{-n^{1/2-o(1)}} = o(1). 
\]
So $X > 0$ whp.

Consider the original random geometric graph $G_O$ again.
Let $X_P$ denote the number of good $\Gamma_i$s under the Poisson model, and let $X_O$ denote 
the number of good $\Gamma_i$s under the original model.
We have, with $K>0$ an arbitrary constant:
\begin{equation}
\label{eq:XOXP} 
\begin{array}{rcl}
\Pee[ X_O = 0 | X_P > 0 ] 
& = & 
\sum_{z=0}^\infty \Pee[ X_O = 0 | X_P > 0, Z=z ] \Pee[ X_P > 0 | Z=z ] \Pee[Z=z ] \\
& \leq & 
\sum_{z=0}^\infty \Pee[ X_O = 0 | X_P > 0, Z=z ] \Pee[ Z=z ] \\
& \leq & 
\sum_{z=n-K\sqrt{n}}^{n+K\sqrt{n}} \Pee[ X_O = 0 | X_P > 0, Z=z ] \Pee[ Z=z ] \\
& & + \Pee[ |Z-n| > K\sqrt{n} ].
\end{array}
\end{equation}
By Chebyschev's inequality we have
\[ 
\Pee \left[ |Z-n| > K\sqrt{n} \right] \leq \Var(Z) / (K\sqrt{n})^2 = 1/K^2. 
\]
Now consider the term $\Pee[ X_O = 0 | X_P > 0, Z=z ]$.
If $z=n$ then it clearly equals 0.
Let us take $n-K\sqrt{n} \leq z < n$.
If we condition on the event that $X_P > 0, Z=z$, then we
can fix a good $\Gamma_i$, say with ``corners'' $(x_{i_1},\dots,x_{i_N})$.
If $X_O = 0$ then the set 
$A := \bigcup_{j=1}^N W(x_{i_{j-1}},x_{i_{j+1}})$ must contain
one of the points $x_{z+1},\dots,x_n$.
By equation \eqref{eqn:area-lower-bound}, $\area(A) = N \cdot O( r^2/N^3 ) = O(r^2/N^2)$.
Thus, for $n-K\sqrt{n} < z < n$ we have
\[ 
\begin{array}{rcl}
\Pee[ X_O=0|X_P>0,Z=z] 
& \leq & 
(n-z) \cdot O(r^2/N^2) \\
& \leq & 
K \sqrt{n} \cdot O( r/\sqrt{n} ) \\
& = & 
o(1), 
\end{array}  
\]
using $N = \lceil \left( \pi nr^2\right)^{1/4} \rceil$.
Observe that the $o(1)$ bound is {\em uniform} over all $n-K\sqrt{n} < z < n$.

Similarly, if we condition on the event that $X_P > 0, Z=z$ with $n < z \leq n+K\sqrt{n}$, we
can pick an $N$-tuple $(x_{i_1},\dots,x_{i_N})$ uniformly at random from all $N$-tuples that are
``corners'' of a good $\Gamma_i$.
The indices $i_1,\dots,i_N$ are a uniformly random sample (without replacement)
from $\{1,\dots,z\}$.
Now, if $X_O = 0$, it must hold that one of $i_1,\dots i_N$ is larger than $n$.
Note $\Pee( i_j > n ) = (z-n)/z$ for $j=1,\dots,N$, and so
\[ 
\Pee[ X_O=0|X_P>0,Z=z]  \leq  
N \left(\frac{z-n}{z}\right) \\ 
 \leq  
(\pi nr^2)^{\frac14} \left(\frac{K\sqrt{n}}{n}\right) \\
 =  
K \pi^{\frac{1}{4}} n^{-\frac14} r^{\frac12} \\
 =  
o(1). 
\]
Observe that again the $o(1)$ bound is {\em uniform} over all $z$ considered.
Combining these bounds with~\eqref{eq:XOXP} we get
\[ 
\begin{array}{rcl}
\Pee[ X_O = 0 | X_P > 0 ] 
& \leq &  
1/K^2 + \sum_{z=n-K\sqrt{n}}^{n+K\sqrt{n}} o(1) \cdot \Pee[ Z=z ] \\
& = & 
1/K^2 + o(1). 
\end{array} 
\]
By sending $K\to\infty$, we see that $\Pee[ X_O = 0 | X_P > 0 ] = o(1)$, so 
\[ 
\Pee[ X_O > 0 ] \geq  
\Pee[ X_O > 0 | X_P > 0 ] \Pee[ X_P > 0 ] = (1-o(1))(1-o(1)) = 1-o(1), 
\]
which concludes the proof.
\end{proofof}

\bibliography{geocop-bib}

\end{document}